\newtheorem{cor}{Corollary}[section]
\newtheorem{theorem}[cor]{Theorem}
\newtheorem*{theorem*}{Theorem}
\newtheorem{prop}[cor]{Proposition}
\newtheorem{lemma}[cor]{Lemma}
\newtheorem{fact}[cor]{Fact}
\theoremstyle{definition}
\newtheorem{defi}[cor]{Definition}
\theoremstyle{remark}
\newtheorem{remark}[cor]{Remark}
\newcommand{\C}{{\mathbb C}}
\newcommand{\R}{{\mathbb R}}
\newcommand{\Z}{{\mathbb Z}}
\newcommand{\diff}{\mathrm{Diff}}
\newcommand{\symp}{\mathrm{Symp}}
\newcommand{\ham}{\mathrm{Ham}}
\newcommand{\Hom}{\mathrm{Hom}}
\newcommand{\boh}{\mathcal{C}}
\newcommand{\dR}{\mathrm{dR}}
\newcommand{\flux}{\mathrm{Flux}}
\newcommand{\Hyp}{\mathbb{H}}
\newcommand{\AdS}{\mathbb{A}\mathrm{d}\mathbb{S}}
\newcommand{\psl}{\mathfrak{sl}}
\newcommand{\PSL}{\mathrm{PSL}}
\newcommand{\PSLR}{\mathrm{PSL}_2\R}
\newcommand{\ML}{{\mathrm{M}\!\mathrm{L}}}
\newcommand{\lab}{\mathrm{L}}
\newcommand{\tr}{\mbox{\rm tr}\,}
\newcommand{\grad}{\operatorname{grad}}
\newcommand{\isom}{\mathrm{Isom}}
\newcommand{\RP}{\R \mathrm{P}}
\begin{document}

\setcounter{secnumdepth}{3}
\setcounter{tocdepth}{2}

\title[Flux homomorphism and Anti-de Sitter geometry]{The flux homomorphism on closed\\hyperbolic surfaces and Anti-de Sitter three-dimensional geometry}

\author[Andrea Seppi]{Andrea Seppi}
\address{Andrea Seppi: Dipartimento di Matematica ``Felice Casorati", Universit\`{a} degli Studi di Pavia, Via Ferrata 5, 27100, Pavia, Italy.} \email{andrea.seppi01@ateneopv.it}


\begin{abstract}
Given a smooth spacelike surface $\Sigma$ of negative curvature in Anti-de Sitter space of dimension 3, invariant by a representation $\rho:\pi_1(S)\to\PSLR\times\PSLR$ where $S$ is a closed oriented surface of genus $\geq 2$, a canonical construction associates to $\Sigma$ a diffeomorphism $\phi_\Sigma$ of $S$. It turns out that $\phi_\Sigma$ is a symplectomorphism for the area forms of the two hyperbolic metrics $h$ and $h'$ on $S$ induced by the action of $\rho$ on $\Hyp^2\times\Hyp^2$. Using an algebraic construction related to the flux homomorphism, we give a new proof of the fact that $\phi_\Sigma$ is the composition of a Hamiltonian symplectomorphism of $(S,h)$ and the unique minimal Lagrangian diffeomorphism from $(S,h)$ to $(S,h')$.
\end{abstract}

\maketitle

\section{Introduction}

Anti-de Sitter space is a real Lorentzian three-manifold of constant negative sectional curvature, which can be defined as the Lie group $\PSLR$ endowed with the Lorentzian metric induced by the Killing form. In recent times, since the groundbreaking paper \cite{mess} of Mess, its study has spread widely, mostly motivated by the relations between Anti-de Sitter space (which is denoted $\AdS^3$) and Teichm\"uller theory of hyperbolic surfaces --- and the present paper lies in this research direction as well.

Let us explain one of the instances of this relation. A remarkable construction permits to associate to a smooth spacelike surface $\Sigma$ (topologically a disc) in $\AdS^3$ a submanifold $\Lambda_\Sigma$ of $\Hyp^2\times \Hyp^2$, where $\Hyp^2$ is the hyperbolic plane. This is essentially due to the fact that the space of timelike lines of $\AdS^3$ is identified to $\Hyp^2\times \Hyp^2$. Hence the submanifold $\Lambda_\Sigma$ is the set of timelike lines orthogonal to $\Sigma$ --- roughly speaking, the analogue of the Gauss map in this context. 

It turns out, as observed in \cite{barbotkleinian} and \cite{bonsepequivariant}, that $\Lambda_\Sigma$ is always a \emph{Lagrangian} submanifold, for the symplectic structure which makes $\Hyp^2\times \bar{\Hyp}^2$ a Kähler manifold, namely $\Omega=(\pi_l^*\Omega_{\Hyp^2}-\pi_r^*\Omega_{\Hyp^2})$, where $\Omega_{\Hyp^2}$ is the area form of $\Hyp^2$, $\pi_l,\pi_r$ are the projections to each factor of $\Hyp^2\times\bar{\Hyp}^2$, and $\bar{\Hyp}^2$ denotes $\Hyp^2$ endowed with the opposite orientation. 
There are two classes of particular cases of this construction:
\begin{itemize}
\item If $\Sigma$ has negative curvature, then $\Lambda_\Sigma$ is the graph of a symplectomorphism $\phi_\Sigma$ of $(\Hyp^2,\Omega_{\Hyp^2})$ --- this is the prototype of Lagrangian submanifolds of $\Hyp^2\times \Hyp^2$. This case was first considered in  \cite{Schlenker-Krasnov}. 
\item If $\Sigma=\Sigma_0$ has vanishing mean curvature (also called \emph{maximal}, which implies negative curvature), then $\phi_{\Sigma_0}$ is \emph{minimal Lagrangian}, that is, it is a symplectomorphism and its graph is a minimal surface in the Riemannian product $\Hyp^2\times \Hyp^2$. See \cite{bon_schl,torralbo,bbzads,seppimaximal,seppiminimal} for results in this direction.
\end{itemize}
From the results of \cite{barbotkleinian} and \cite{ksurfaces}, it follows that being Lagrangian is essentially the only obstruction to inverting this construction, namely to realizing a submanifold of $\Hyp^2\times\Hyp^2$ as the image of the Gauss map of a spacelike surface in $\AdS^3$.

However, the situation is extremely more interesting when we consider cocompact actions. More precisely, let $S$ be a closed oriented surface of negative Euler characteristic. Suppose $\Sigma$ is invariant by an action of the group $\pi_1(S)$, which preserves the orientation and the time-orientation of $\AdS^3$. This produces therefore a representation $\rho$ of $\pi_1(S)$ in the isometry group $\isom(\AdS^3)$, which (from the definition of $\AdS^3$ as $\PSLR$ endowed with the bi-invariant metric) is naturally isomorphic to $\PSLR\times\PSLR$. By the theory developed in \cite{mess}, it turns out that $\rho=(\rho_l,\rho_r)$ where $\rho_l$ and $\rho_r$ are Fuchsian representations --- that is,  $\rho_l(\pi_1(S))$ and $\rho_r(\pi_1(S))$
act freely and properly discontinuously on $\Hyp^2$, with quotient a hyperbolic surface homeomorphic to $S$. 

Let us identify the two quotient hyperbolic surfaces $\Hyp^2/\rho_l(\pi_1(S))$ and $\Hyp^2/\rho_r(\pi_1(S))$ by $(S,h_l)$ and $(S,h_r)$ respectively, where $h_l$ and $h_r$ are hyperbolic metrics on $S$. Then the Lagrangian submanifold $\Lambda_\Sigma$ of $\Hyp^2\times\Hyp^2$ descends to a Lagrangian submanifold in the quotient $(S\times S,h_l\oplus h_r)$. In particular, if the cocompact surface $\Sigma$ has negative curvature, then $\phi_\Sigma$ induces a symplectomorphism, which we denote again by $\phi_\Sigma:(S,\Omega_{h_l})\to (S,\Omega_{h_r})$. On the other hand, again by the work of Mess, whenever one picks two Fuchsian representations $\rho_l$ and $\rho_r$, there is abundance of embedded surfaces $\Sigma\subset \AdS^3$ on which $\pi_1(S)$ acts freely and properly discontinuously by means of the representation $\rho=(\rho_l,\rho_r):\pi_1(S)\to\PSLR\times\PSLR$. For instance, there always exists a (unique) invariant maximal surface $\Sigma_0$ (i.e. of vanishing mean curvature, see \cite{bbzads}), which induces the minimal Lagrangian diffeomorphism isotopic to the identity:
$$\phi_{\Sigma_0}=\phi_\ML:(S,h_l)\to (S,h_r)~.$$
It was already known from the results of \cite{labourieCP} and \cite{Schoenharmonic} that such a minimal Lagrangian diffeomorphism $\phi_\ML$ exists and is unique, for any two closed hyperbolic surfaces $(S,h_l)$ and $(S,h_r)$.

Hence it is a natural question to characterize the symplectomorphisms 
$$\phi_{\Sigma}=:(S,\Omega_{h_l})\to (S,\Omega_{h_r})~,$$
which arise as the symplectomorphism associated to a cocompact surface $\Sigma$ in $\AdS^3$.
The main result of this paper is a new proof of the following:

\begin{theorem*} 
Let $\rho_l,\rho_r:\pi_1(S)\to\PSLR$ be Fuchsian representations and let $\Sigma\subset \AdS^3$ be a smooth, embedded spacelike surface invariant for the representation $\rho=(\rho_l,\rho_r):\pi_1(S)\to\isom_0(\AdS^3)$, whose curvature is negative. 
Then 
$$\phi_\Sigma=\phi_{\ML}\circ \psi~,$$
where 
\begin{itemize}
\item $\phi_\Sigma:(S,\Omega_{h_l})\to (S,\Omega_{h_r})$ is the diffeomorphism associated to $\Sigma$;
\item  $\phi_{\ML}:(S,{h_l})\to (S,{h_r})$ is the unique minimal Lagrangian diffeomorphism isotopic to the identity;
\item $\psi$ is a Hamiltonian symplectomorphism, for the area form $\Omega_{h_l}$.
\end{itemize}
\end{theorem*}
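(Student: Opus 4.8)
The plan is to recast the statement as the vanishing of a flux class. Since $\rho=(\rho_l,\rho_r)$ is held fixed and both $\phi_\Sigma$ and $\phi_{\ML}$ are equivariant for it and induce the identity on $\pi_1(S)$, the map $\psi:=\phi_{\ML}^{-1}\circ\phi_\Sigma$ is a diffeomorphism of $S$ isotopic to the identity; as $\Omega_{h_l}$ and $\Omega_{h_r}$ are hyperbolic area forms of equal total area (Gauss--Bonnet) and $\phi_\Sigma,\phi_{\ML}$ preserve them, $\psi\in\symp_0(S,\Omega_{h_l})$. For genus $\geq 2$ the group $\diff_0(S)$ is contractible (Earle--Eells), and by Moser's argument $\symp_0(S,\Omega_{h_l})$ is homotopy equivalent to it; hence $\pi_1(\symp_0)=0$, the flux group is trivial, and $\flux:\symp_0(S,\Omega_{h_l})\to H^1(S;\R)$ is a well-defined homomorphism with $\ker\flux=\ham(S,\Omega_{h_l})$. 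Thus the theorem is equivalent to $\flux(\psi)=0$, and it suffices to exhibit a Hamiltonian isotopy from $\mathrm{id}$ to $\psi$.

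To this end I would connect $\Sigma$ to the canonical maximal surface $\Sigma_0$ by a path $\{\Sigma_t\}_{t\in[0,1]}$ of $\rho$-invariant spacelike surfaces of negative curvature, with $\Sigma_1=\Sigma$ and $\phi_{\Sigma_0}=\phi_{\ML}$; existence of such a path should follow from connectedness of the space of negatively curved $\rho$-invariant surfaces (for instance by interpolating the embedding data $(I,B)$, or the support functions, keeping $\det B>-1$ so the curvature stays negative). This yields a path $\psi_t:=\phi_{\ML}^{-1}\circ\phi_{\Sigma_t}$ in $\symp_0(S,\Omega_{h_l})$ from $\mathrm{id}$ to $\psi$. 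Writing $Y_t$ for the variation vector field of $\phi_{\Sigma_t}$ on the target and $X_t$ for the generator of $\psi_t$, a direct computation using $\phi_{\ML}^{*}\Omega_{h_r}=\Omega_{h_l}$ gives $\iota_{X_t}\Omega_{h_l}=\phi_{\ML}^{*}\bigl(\iota_{Y_t}\Omega_{h_r}\bigr)$, and interpreting $\phi_{\Sigma_t}$ through its Lagrangian graph $\Lambda_{\Sigma_t}\subset(S\times S,\Omega)$ identifies $\iota_{Y_t}\Omega_{h_r}$, up to pullback by a map isotopic to the identity, with the closed $1$-form $\beta_t$ governing the deformation of $\Lambda_{\Sigma_t}$ (contract the normal variation of $\Lambda_{\Sigma_t}$ with $\Omega$ and restrict to $\Lambda_{\Sigma_t}$). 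Consequently $\flux(\psi)=-\int_0^1[\beta_t]\,dt$, and the problem reduces to showing $[\beta_t]=0$ for each $t$.

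The heart of the argument---the ``algebraic construction related to the flux homomorphism''---is precisely this exactness. Here I would exploit the Anti-de Sitter description of $\Sigma_t$: a convex spacelike surface is encoded by a support function, and a first-variation computation using the Gauss and Codazzi equations should express $\beta_t$ as $d\dot u_t$, where $\dot u_t$ is the variation of the support function. The decisive point is that $\rho$ is fixed along the path, so that $\dot u_t$ is genuinely $\pi_1(S)$-invariant and descends to a single-valued function on the closed surface $S$; this is exactly what upgrades the automatic closedness of $\beta_t$ (a consequence of the Lagrangian condition) to exactness. Granting this, $X_t$ is the Hamiltonian vector field of $\dot u_t$, the isotopy $\psi_t$ is Hamiltonian, and $\psi=\psi_1\in\ham(S,\Omega_{h_l})$, yielding $\phi_\Sigma=\phi_{\ML}\circ\psi$ with $\psi$ Hamiltonian.

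The main obstacle I anticipate is this last exactness statement: a priori the deformation form $\beta_t$ is only closed, and its periods could be nonzero, so the whole content is to show that deforming an equivariant spacelike surface induces a \emph{Hamiltonian}, not merely symplectic, deformation of its Gauss-map Lagrangian. Carrying this out demands an explicit single-valued primitive built from the AdS embedding data, which is the genuinely AdS-geometric input; by comparison, the reduction to flux, the construction of $\{\Sigma_t\}$, and the identification of $\beta_t$ are technical but routine. A secondary point to treat carefully is that the candidate primitive $\dot u_t$ is indeed $\pi_1(S)$-invariant and smooth, i.e.\ that the support-function formalism is compatible with the cocompact action of $\rho$.
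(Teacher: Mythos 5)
Your reduction of the theorem to the vanishing of $\flux(\psi)$ for $\psi=\phi_{\ML}^{-1}\circ\phi_\Sigma$ coincides with the paper's, but from there you take a genuinely different route: you propose to join $\Sigma_0$ to $\Sigma$ by a path of invariant negatively curved surfaces and to prove that the induced isotopy $\psi_t$ is itself Hamiltonian, by showing that the closed deformation form $\beta_t$ of the Lagrangian $\Lambda_{\Sigma_t}$ is exact, with a single-valued primitive coming from the variation of the embedding data (single-valued precisely because $\rho$ is held fixed). This is in essence the strategy of \cite{bonsepequivariant}, i.e.\ the earlier proof to which the present paper explicitly offers an alternative. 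The paper instead argues pointwise in $t$, with no differentiation along the path: it introduces the map $\boh_{h,h'}$ built from the difference of connection forms $\eta_{\psi,b}$, shows that $\boh_{h,h}=\pi\circ\flux$ where $\pi$ is the projection modulo the discrete subgroup $H^1_{\dR}(S,2\pi\Z)$ (Lemma \ref{composition formula}, Proposition \ref{prop coincidence}), and then uses the AdS input in purely algebraic form: both $\phi_\Sigma$ and $\phi_{\ML}$ carry Codazzi tensors ($d^{\nabla_l}b=0$, Fact \ref{fact tensor b} and Lemma \ref{lemma minimal lag}), so $\eta$ vanishes identically for them (Corollary \ref{cor vanishing codazzi condition}) and hence $\flux(\psi_t)\in H^1_{\dR}(S,2\pi\Z)$ for every $t$. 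The interpolating family is then needed only to resolve a discrete ambiguity --- a continuous path in a discrete set starting at $0$ --- whereas in your scheme it must carry the whole analytic weight. What your route buys, if completed, is a stronger conclusion (an explicit Hamiltonian isotopy, and, phrased for Lagrangians, a statement valid without the negative-curvature hypothesis); what the paper's route buys is exactly the avoidance of the first-variation computation you identify as the heart of the matter.

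That said, as submitted your argument has a genuine gap, which you yourself flag: the exactness $[\beta_t]=0$ is asserted (``a first-variation computation \dots should express $\beta_t$ as $d\dot u_t$'') but not carried out, and it is not a routine verification --- it is the entire content of the theorem, amounting to a reproof of the main result of \cite{bonsepequivariant}. Two points need care if you attempt it. First, ``support function'' is not the right primitive in general: negative curvature only means $\det B>-1$ (via the Gauss equation $K=-1-\det B$), so the surfaces $\Sigma_t$ need not be convex, and the robust candidate for the primitive is the normal component of the $\rho$-invariant variation field of the embeddings (which descends to $S$ exactly because $\rho$ is fixed), not a support function in the classical sense. Second, your path must consist of surfaces with everywhere nonzero curvature merely for $\psi_t$ to be defined as a symplectomorphism (Fact \ref{fact projections}); the connectedness of this space is asserted rather than proved, and your suggested construction by interpolating $(I,B)$ does not work as stated, since linear interpolation does not preserve the Gauss--Codazzi equations. (The paper's own proof relies on a closely related interpolation claim, so this second point does not distinguish the two approaches, but it is a step you would still owe.) In short: your plan is viable and follows a legitimately different --- indeed the previously known --- proof scheme, but until the exactness computation is actually performed it remains an outline rather than a proof.
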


Given a symplectic manifold $(M,\Omega)$, a symplectomorphism $\psi:(M,\Omega)\to (M,\Omega)$ is Hamiltonian if there exists an isotopy $\psi_t$ with $\psi_0=\mathrm{id}$ and $\psi_1=\psi$ and a smooth family of functions $H_t:M\to\R$ such that the generating vector field $X_t$ of the isotopy $\psi_t$ is the symplectic gradient of $H_t$ for every $t$. It turns out that Hamiltonian symplectomorphisms form a group, denoted $\ham(M,\Omega)$.

The above theorem is actually a consequence of the main result in \cite{bonsepequivariant}, where it was proved that any submanifold $\Lambda_\Sigma$ associated to an invariant surface $\Sigma$ is in the same $\ham(S\times S,\pi_l^*\Omega_{h_l}-\pi_r^*\Omega_{h_r})$-orbit of the submanifold $graph(\phi_\ML)$. This implies that, when $\Sigma$ has negative curvature, and therefore $\Lambda_\Sigma$ is the graph of the symplectomorphism $\phi_\Sigma:(S,\Omega_{h_l})\to (S,\Omega_{h_r})$, then $\phi_\Sigma$ and $\phi_\ML$ differ by a Hamiltonian symplectomorphism of $(S,\Omega_{h_l})$. 

However, we provide here a new proof, when $\Sigma$ has negative curvature. This relies on the construction of a map
$\boh_{h,h'}$, defined on $\symp_0(S,\Omega_h,\Omega_{h'})$ with values in the quotient of the de Rham cohomology group $H^1_\dR(S,\R)\cong \Hom(\pi_1(S),\R)$ by the discrete subgroup of 1-forms with periods integer multiples of $2\pi$, which is identified to $\Hom(\pi_1(S),2\pi\Z)$. This map is given by a rather algebraic construction, which involves differential-geometric invariants of the two hyperbolic surfaces $h$ and $h'$. Hence \emph{a priori} it depends on the choice of the two hyperbolic metrics. However, we then show that, when $h=h'$, this map coincides with the so-called \emph{flux homomorphism} (see \cite{MR0350776}, \cite{MR490874} and \cite[Chapter 6]{MR1698616}):
$$\flux:\symp_0(S,\Omega)\to H^1_\dR(S,\R)~,$$
post-composed with the projection to the quotient of $H^1_\dR(S,\R)$ by the subspace $\Hom(\pi_1(S),2\pi\Z)$. In particular, when $h=h'$, then the map $\boh_{h,h}$ turns out to be a homomorphism (this is proved directly, and is an important step of the proof), and only depends on the area form of $h$.
The proof of the main theorem then follows from interpreting such map $\boh_{h,h'}$ in terms of Anti-de Sitter geometry, and showing essentially  that it vanishes when the symplectomorphism $\phi_\Sigma$ is associated to an invariant spacelike surface $\Sigma$. One then obtains our main result as a consequence of the classical fact that the group of Hamiltonian symplectomorphisms $\ham(M,\Omega)$ is exactly the kernel of the flux homomorphism $\flux$, applied to the symplectic manifold $(M,\Omega)=(S,\Omega_{h_l})$. 

The paper then terminates with some remarks about the converse statement of our main theorem. Namely, given $\phi$ of the form $\phi_{\ML}\circ \psi$, for $\psi\in \ham(S,\Omega_{h_l})$, one can construct an invariant surface $\Sigma$ in $\AdS^3$, which is the candidate to be a surface such that $\phi_\Sigma=\phi$. This is indeed the case if $\Sigma$ is embedded. However, such surface $\Sigma$ will in general develop singularities. Although this generality is not considered here, in \cite{bonsepequivariant} it was showed that one can always construct a smooth lift of $\Sigma$ to the unit tangent bundle of the universal cover of $\AdS^3$, thus partially reversing the implication proved here.

\subsection*{Acknowledgements}
I would like to thank Thierry Barbot, Francesco Bonsante and Alessandro Ghigi for many invaluable conversations.



\section{Algebraic construction for the flux homomorphism} \label{sec algebraic}

In this paper, $S$ will denote a closed oriented surface of Euler characteristic $\chi(S)<0$. Moreover, we will usually fix two hyperbolic metrics (i.e. Riemannian metrics of constant curvature $-1$) $h$ and $h'$ on $S$.

In the first section, we will recall the classical definition of flux homomorphism, for a symplectic surface $(S,\Omega)$, from the point of view of symplectic geometry. Then we will give the definition of the alternative map $\boh_{h,h'}$ which \emph{a priori} depends on the choice of hyperbolic metrics $h$ and $h'$ on $S$. We will study some of its properties, for instance the fact that it is a homomorphism in the case $h=h'$, and we will show that the two homomorphisms essentially coincide in this case. 

\subsection*{The flux homomorphism}

In the definitions below, we will only suppose that $S$ is a closed oriented surface endowed with a symplectic form $\Omega$. Later, we will consider $\Omega=\Omega_h$ as the area form induced by a hyperbolic metric $h$.
Let us denote by $\symp_0(S,\Omega)$ the group of symplectomorphisms $\psi:(S,\Omega)\to(S,\Omega)$ isotopic to the identity.

Let us recall the definition of Hamiltonian symplectomorphisms (see \cite[Chapter 6]{MR1698616} as a reference):

\begin{defi}
Let $(S,\Omega)$ be a closed symplectic manifold. Then a symplectomorphism $\psi:(S,\Omega)\to(S,\Omega)$ is \emph{Hamiltonian} if there exists a smooth isotopy 
$$\psi_\bullet:[0,1]\to \symp_0(S,\Omega)~,$$ 
with $\psi_0=\mathrm{id}$ and $\psi_1=\psi$, and a smooth map
$$H_\bullet:[0,1]\times S\to \R~,$$
such that (if we denote $H_t(p)=H(t,p)$):
$$\Omega(X_t,\cdot)=dH_t~,$$
where $X_t$ is the generating vector field of the isotopy $\psi_t$, namely:
$$\frac{d}{dt}\psi_t=X_t\circ \psi_t~.$$
\end{defi}

It turns out that the space of Hamiltonian symplectomorphisms is a group, which we denote by $\ham(S,\Omega)$. Let us now define the flux homomorphism
$$\flux:\symp_0(S,\Omega)\to H^1_\dR(S,\R)~.$$

\begin{defi}
Given a symplectomorphism $\psi:(S,\Omega)\to (S,\Omega)$ isotopic to the identity, take an isotopy $\psi_t$ with $\psi_0=\mathrm{id}$ and $\psi_1=\psi$, and define
$$\flux(\psi)=\int_0^1 [\Omega(X_t,\cdot)]dt\in H^1_\dR(S,\R)~,$$
where $X_t$ is the generating vector field  of the isotopy $\psi_t$, namely:
$$\frac{d}{dt}\psi_t=X_t\circ \psi_t~.$$
\end{defi}

It turns out that, if there exists a homotopy in $\symp_0(S,\Omega)$ between the two paths 
$$\psi_\bullet,\psi'_\bullet:[0,1]\to\symp_0(S,\Omega)$$
 with fixed endpoints, then the value of $\flux(\psi)$ does not change. Since the group $\symp_0(S,\Omega)$ is simply connected, see \cite[Section 7.2]{polterovich},  $\flux$ is well-defined on $\symp_0(S,\Omega)$. In fact, by the Moser isotopy argument (see \cite{mcduffsurvey}), the inclusion of $\symp(S,\Omega)$ into $\diff(S)$ is a homotopy equivalence, and $\diff_0(S)$ is contractible by \cite{earleeelsdiffo}.

The flux homomorphism provides the following characterization of Hamiltonian symplectomorphisms:

\begin{theorem} \label{thm flux hamiltonian}
The sequence
\[
\xymatrix{
1\ar[r] &\ham(S,\Omega)\ar[r]^{\!\!\!\!\!\!i} &\symp_0(S,\Omega)\ar[r]^\flux & H^1_\dR(S,\R)\ar[r]  & 1
}
\]
is a short exact sequence of groups. In particular, a symplectomorphisms $\psi:(S,\Omega)\to(S,\Omega)$ is Hamiltonian if and only if $\flux(\psi)=0$.  
\end{theorem}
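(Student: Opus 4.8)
The plan is to verify the four statements packaged in the short exact sequence: that $\flux$ is a group homomorphism, that it is surjective, that $\ham(S,\Omega)\subseteq\ker\flux$, and --- the only delicate point --- that $\ker\flux\subseteq\ham(S,\Omega)$. Throughout I take for granted, as recalled above, that $\flux$ is well defined on $\symp_0(S,\Omega)$ (this is where the simple connectivity of $\symp_0(S,\Omega)$ is used), so that $\flux(\psi)$ does not depend on the chosen isotopy $\psi_t$.

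For the homomorphism property I would use the composition rule for generating vector fields: if $\psi_t$ and $\phi_t$ are isotopies generated by $X_t$ and $Y_t$, then $\psi_t\circ\phi_t$ is generated by $X_t+(\psi_t)_*Y_t$. Since each $\psi_t$ is a symplectomorphism one has $\Omega\bigl((\psi_t)_*Y_t,\cdot\bigr)=(\psi_t)_*\bigl(\Omega(Y_t,\cdot)\bigr)$, and because $\psi_t$ is isotopic to the identity its induced action on $H^1_\dR(S,\R)$ is trivial; hence $[\Omega((\psi_t)_*Y_t,\cdot)]=[\Omega(Y_t,\cdot)]$, and integrating in $t$ gives $\flux(\psi\circ\phi)=\flux(\psi)+\flux(\phi)$. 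Surjectivity is immediate: given a class $a\in H^1_\dR(S,\R)$ represented by a closed $1$-form $\alpha$, nondegeneracy of $\Omega$ produces a vector field $Z$ with $\Omega(Z,\cdot)=\alpha$, and $\alpha$ closed means $Z$ is symplectic; its autonomous time-one flow lies in $\symp_0(S,\Omega)$ and has flux $[\alpha]=a$. Finally, the inclusion $\ham(S,\Omega)\subseteq\ker\flux$ is formal: for a Hamiltonian isotopy each $\Omega(X_t,\cdot)=dH_t$ is exact, so every cohomology class in the integrand vanishes and $\flux(\psi)=0$.

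The substance of the theorem is the reverse inclusion $\ker\flux\subseteq\ham(S,\Omega)$. Given $\psi$ with $\flux(\psi)=0$ and an isotopy $\psi_t$ generating $X_t$, the idea is to deform $\psi_t$, keeping its endpoints $\mathrm{id}$ and $\psi$ fixed, into an isotopy whose generator $X'_t$ satisfies $[\Omega(X'_t,\cdot)]=0$ for every $t$; such an isotopy is Hamiltonian by definition, since each $\Omega(X'_t,\cdot)$ is then exact. To organize this I would consider the flux path $F(t)=\int_0^t[\Omega(X_s,\cdot)]\,ds\in H^1_\dR(S,\R)$, a loop based at $0$ since $F(0)=F(1)=\flux(\psi)=0$. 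Choosing harmonic representatives $\gamma_t$ of the classes $[\Omega(X_t,\cdot)]$ and the associated symplectic vector fields $Z_t$ with $\Omega(Z_t,\cdot)=\gamma_t$, subtracting $Z_t$ from $X_t$ kills the cohomological part of the integrand and yields a Hamiltonian generator; the point that still needs care is that this subtraction alters the time-one map, so one must check that the discrepancy is again Hamiltonian. Here the hypothesis enters through $\int_0^1[\gamma_t]\,dt=\flux(\psi)=0$, which forces the harmonic form $\int_0^1\gamma_t\,dt$ to vanish identically.

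The main obstacle is precisely this last point: naively subtracting a symplectic correction with the same periods changes the endpoint, and controlling this change without reintroducing flux is what makes the argument subtle. The clean way to handle it is through the homotopy invariance of the flux of a path rel endpoints --- the statement that $\int_0^1[\Omega(X_t,\cdot)]\,dt$ is unchanged under homotopies of the isotopy with $\psi_0=\mathrm{id}$ and $\psi_1=\psi$ held fixed. Granting this invariance, the contractibility of the loop $F$ in the vector space $H^1_\dR(S,\R)$ can be lifted to a homotopy of isotopies rel endpoints whose flux paths interpolate from $F$ to the constant loop $0$; the endpoint of the straightened isotopy is again $\psi$, and its flux path being identically zero makes it Hamiltonian, proving $\psi\in\ham(S,\Omega)$. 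This homotopy-invariance lemma, which is where the simple connectivity of $\symp_0(S,\Omega)$ is genuinely exploited, is the technical heart of the proof and the step I would expect to occupy most of the work; everything else is bookkeeping with generating vector fields.
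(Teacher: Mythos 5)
The paper does not actually prove Theorem \ref{thm flux hamiltonian}: it is stated as a classical result and attributed to the cited references (\cite{MR0350776}, \cite{MR490874}, \cite[Chapter 6]{MR1698616}), so your proposal has to be judged against the standard proof in those sources. The routine parts of your argument --- well-definedness, the homomorphism property via $[\Omega((\psi_t)_*Y_t,\cdot)]=[\Omega(Y_t,\cdot)]$, surjectivity via flows of symplectic fields dual to closed forms, and $\ham(S,\Omega)\subseteq\ker\flux$ --- are correct. The genuine gap is in the inclusion $\ker\flux\subseteq\ham(S,\Omega)$, at precisely the step you flag as delicate. Your harmonic-subtraction argument, made precise, replaces $\psi_t$ by $\psi_t\circ\theta_t^{-1}$, where $\theta_t$ is the isotopy generated by the fields $Z_t$; this is indeed a Hamiltonian isotopy, but it ends at $\psi\circ\theta_1^{-1}$, so it only shows that $\psi\in\ham(S,\Omega)$ if and only if $\theta_1\in\ham(S,\Omega)$ --- and $\theta_1$ is again the time-one map of a symplectic isotopy of total flux zero, i.e.\ an instance of the very problem being solved (the vanishing of the harmonic form $\int_0^1\gamma_t\,dt$ gives nothing, since the time-one map of a time-dependent flow of non-commuting fields is not determined by the time integral of the fields). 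Your proposed repair --- that the contraction of the loop $F$ in $H^1_\dR(S,\R)$ ``can be lifted to a homotopy of isotopies rel endpoints'' --- does not follow from the homotopy invariance of the flux: that invariance says that homotopic isotopies have equal flux, whereas the lifting you invoke goes in the opposite direction and amounts to an unproved homotopy lifting property for the map sending an isotopy to its flux path. Asserting it is essentially assuming the theorem, so the argument is circular at its core.

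For the record, the standard proof closes this gap with a different correction, built from the \emph{partial} flux rather than from the instantaneous classes. Set $a(t)=\int_0^t[\Omega(X_s,\cdot)]\,ds$ and choose closed (e.g.\ harmonic) representatives $\beta_t$ of $a(t)$; since $a(0)=0$ and $a(1)=\flux(\psi)=0$, one can take $\beta_0=\beta_1=0$. Let $Y_t$ be the autonomous symplectic vector field with $\Omega(Y_t,\cdot)=-\beta_t$, and let $\theta^t_s$ denote its time-$s$ flow. Then $\Psi_t:=\theta^t_1\circ\psi_t$ still runs from $\mathrm{id}$ to $\psi$, because $Y_0=Y_1=0$ forces $\theta^0_1=\theta^1_1=\mathrm{id}$ --- this is exactly where the hypothesis $\flux(\psi)=0$ enters --- while the generator of $\Psi_t$ has $\Omega$-dual class $[\Omega(X_t,\cdot)]-a'(t)=0$ for every $t$, so $\Psi_t$ is a Hamiltonian isotopy and $\psi\in\ham(S,\Omega)$. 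This two-parameter correction (flowing for an auxiliary time $s$ along fields dual to representatives of the partial flux $a(t)$) is the idea your proposal is missing; with it, the rest of what you wrote assembles into a complete proof.
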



\subsection*{An alternative homomorphism}

Let us now fix two hyperbolic metrics $h$ and $h'$ on $S$, let $\Omega_h$ and $\Omega_{h'}$ be the area forms induced by $h$ and $h'$, and let $\psi:(S,\Omega_h)\to(S,\Omega_{h'})$ be a symplectomorphism. 

We denote by $\isom(TS,\psi^*h',h)$ the 
subbundle of the bundle $\mathrm{End}(T S)$, whose fibers over $x\in S$ are linear orientation-preserving automorphisms of $T_x S$ which are isometries between the metrics $\psi^*h'$ and $h$. 




Now, given a symplectomorphism $\psi$ and a smooth section $b$ of the bundle $\isom(TS,\psi^*h',h)$, we will define a smooth 1-form 
$$\eta_{\psi,b}\in Z^1_{\dR}(S,\R)~.$$
The 1-form $\eta_{\psi,b}$ will \emph{a priori} depend on the hyperbolic metrics $h,h'$, even when $h$ and $h'$ coincide.
 
We start by defining the 1-form locally. Let $\{v_1,v_2\}$ be an oriented orthonormal frame for $h$, on an open subset $U$ of $S$, and let $\omega$ be the associated connection form for the Levi-Civita connection $\nabla$ of $h$, which is defined by:
$$\nabla_v v_1=\omega(v)v_2\,.$$
Analogously, since $\psi^*h'=h(b\cdot,b\cdot)$, let $\omega'$ be the connection form associated to the oriented $\psi^*h'$-orthonormal frame $\{v'_1,v'_2\}:=\{b^{-1}v_1,b^{-1}v_2\}$ and to the Levi-Civita connection $\nabla'$ of $\psi^*h'$, that is:
$$\nabla'_v (b^{-1}v_1)=\omega'(v)(b^{-1}v_2)\,.$$ 
Then define
$$\eta_{\psi,b}=\omega'-\omega\,.$$

The following lemma shows that this definition does not depend on the choice of the orthonormal frame, and therefore this local definition provides a well-defined global 1-form on $S$.
\begin{lemma} \label{eta well defined}
The definition of $\eta_{\psi,b}$ does not depend on the choice of the oriented orthonormal frame $v_1,v_2$.
\end{lemma}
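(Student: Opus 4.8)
The plan is to reduce everything to the classical transformation law for the connection form of an oriented orthonormal frame on an oriented Riemannian surface, and to observe that the rotation angle relating two $h$-orthonormal frames is exactly the same as the rotation angle relating the corresponding $\psi^*h'$-orthonormal frames. First I would recall that, on the overlap of two charts, any two oriented orthonormal frames $\{v_1,v_2\}$ and $\{\tilde v_1,\tilde v_2\}$ for $h$ differ by a rotation: there is a smooth function $\theta$ with $\tilde v_1=\cos\theta\, v_1+\sin\theta\, v_2$ and $\tilde v_2=-\sin\theta\, v_1+\cos\theta\, v_2$. Using $\nabla_v v_1=\omega(v)v_2$ together with metric compatibility (which gives $\nabla_v v_2=-\omega(v)v_1$) and the Leibniz rule, a short computation shows that the associated connection forms satisfy $\tilde\omega=\omega+d\theta$.

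Next comes the key point. The $\psi^*h'$-orthonormal frames attached to the two choices are $\{b^{-1}v_1,b^{-1}v_2\}$ and $\{b^{-1}\tilde v_1,b^{-1}\tilde v_2\}$. Since $b^{-1}$ acts linearly on each fibre $T_xS$ and commutes with multiplication by the scalars $\cos\theta(x)$ and $\sin\theta(x)$, I would deduce $b^{-1}\tilde v_1=\cos\theta\,(b^{-1}v_1)+\sin\theta\,(b^{-1}v_2)$, and similarly for the second vector. Because $b$ is orientation-preserving, these remain oriented orthonormal frames for $\psi^*h'$, related by the \emph{same} rotation angle $\theta$. Applying the transformation law of the first paragraph to the Levi-Civita connection $\nabla'$ of $\psi^*h'$ then yields $\tilde\omega'=\omega'+d\theta$, with the identical function $\theta$.

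Subtracting the two relations gives $\tilde\omega'-\tilde\omega=(\omega'+d\theta)-(\omega+d\theta)=\omega'-\omega$, which is precisely the assertion that $\eta_{\psi,b}$ is independent of the frame. The only genuinely delicate step is the verification that the rotation angle is common to both families of frames: this is exactly where the hypothesis that $b$ is a pointwise linear, orientation-preserving automorphism (so that $b^{-1}$ intertwines the two $\SO(2)$-actions on frames with equal angle) is used, and it is what makes the difference $\omega'-\omega$ descend to a well-defined global $1$-form on $S$.
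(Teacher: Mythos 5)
Your proof is correct and follows essentially the same route as the paper: both arguments establish the transformation law $\widehat\omega=\omega+d\theta$ for the $h$-connection form and then observe that the two $\psi^*h'$-orthonormal frames $\{b^{-1}v_i\}$ and $\{b^{-1}\widehat v_i\}$ are related by a rotation of the \emph{same} angle $\theta$ (the paper phrases this as $R'_\theta=b^{-1}R_\theta b$, you phrase it via fibrewise linearity of $b^{-1}$, which is the same observation), so that the $d\theta$ terms cancel in the difference $\omega'-\omega$.
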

\begin{proof}
Let $\{\widehat v_1,\widehat v_2\}$ be another oriented orthonormal frame for $h$, on the open set $U$, which we can assume simply connected. Then there exists a smooth function $\theta:U\to\R$ (unique up to multiples of $2\pi$) such that
$$\widehat v_i=R_\theta (v_i)\,,$$
where $R_\theta\in\Gamma^\infty(U,\isom(TS,h))$ is the section given, at every point $x\in U$, by  counterclockwise rotation fixing $x$ of angle $\theta$ for the metric $h$. By a direct computation,
$$\nabla_v\widehat v_1=R_\theta \nabla_v v_1+d\theta(v)J_hR_\theta v_1\,,$$
where $J_h=R_{\pi/2}$ is the almost-complex structure of $h$. Hence
$$\widehat \omega(v)=h(\nabla_v \widehat v_1,\widehat v_2)=h(R_\theta \nabla_v v_1,R_\theta v_2)+d\theta(v) h(R_\theta v_2,R_\theta v_2)=\omega(v)+d\theta(v)\,.$$
Observing that, since $\psi^*h'=h(b\cdot,b\cdot)$, the rotation $R'_\theta$ for the metric $\psi^*h'$ coincides with $b^{-1}R_\theta b$, the orthonormal frame $\{\widehat v_1',\widehat v_2'\}$ is $\{R'_\theta v_1',R'_\theta v_2'\}$, and thus by the same computation:
$$\widehat \omega'(v)=\omega'(v)+d\theta(v)\,.$$
Therefore $\widehat \omega'-\widehat \omega=\omega'-\omega$ on $U$, and this concludes the proof.
\end{proof}

Hence we defined a 1-form $\eta_{\psi,b}$ associated to the pair $(\psi,b)$. Let us show that this form is closed:

\begin{lemma}
The 1-form $\eta_{\psi,b}$ is closed.
\end{lemma}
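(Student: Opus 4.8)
The plan is to invoke the second structure equation of a surface, which relates the differential of the connection form to the Gaussian curvature and the area form, and then to exploit the two standing hypotheses: that both metrics involved are hyperbolic, and that $\psi$ is a symplectomorphism.

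First I would recall that, for an oriented orthonormal frame $\{v_1,v_2\}$ of $h$ whose connection form $\omega$ is determined by $\nabla_v v_1=\omega(v)v_2$, the second structure equation reads $d\omega=-K_h\,\Omega_h$, where $K_h$ is the Gaussian curvature and $\Omega_h$ the area form of $h$. Since $h$ is hyperbolic, $K_h\equiv -1$, so $d\omega=\Omega_h$. The identical computation applies verbatim to the metric $\psi^*h'$ and its connection form $\omega'$ relative to the frame $\{b^{-1}v_1,b^{-1}v_2\}$: the pullback $\psi^*h'$ of a hyperbolic metric under a diffeomorphism is again hyperbolic, hence $K_{\psi^*h'}\equiv -1$ and $d\omega'=\Omega_{\psi^*h'}$, where $\Omega_{\psi^*h'}$ denotes the area form of $\psi^*h'$.

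The crucial step is then to identify the two area forms $\Omega_h$ and $\Omega_{\psi^*h'}$. Here I would use that, since $\psi$ is an orientation-preserving diffeomorphism (orientation-preserving because it intertwines the positive area forms $\Omega_h$ and $\Omega_{h'}$), the area form of the pullback metric is the pullback of the area form, i.e. $\Omega_{\psi^*h'}=\psi^*\Omega_{h'}$; and the hypothesis that $\psi$ is a symplectomorphism $(S,\Omega_h)\to(S,\Omega_{h'})$ means precisely $\psi^*\Omega_{h'}=\Omega_h$. Combining these yields $\Omega_{\psi^*h'}=\Omega_h$, and therefore $d\eta_{\psi,b}=d\omega'-d\omega=\Omega_{\psi^*h'}-\Omega_h=0$, as desired.

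The main, and essentially the only, obstacle is securing the identification $\Omega_{\psi^*h'}=\Omega_h$, which is exactly the point at which the symplectomorphism hypothesis enters: without it the two hyperbolic area forms would genuinely differ and $\eta_{\psi,b}$ would fail to be closed. Everything else is the standard surface structure equation, whose frame-independence is already implicit in the relation $\widehat\omega=\omega+d\theta$ established in the previous lemma (rotating the frame changes $\omega$ only by the closed form $d\theta$, so $d\omega$ is intrinsic and equal to $-K_h\Omega_h$).
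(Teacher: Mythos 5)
Your proof is correct and takes essentially the same route as the paper's: apply the structure equation relating $d\omega$ to the curvature and the area form for each of the two hyperbolic metrics, then use the symplectomorphism hypothesis (together with orientation preservation) to identify $\Omega_{\psi^*h'}$ with $\Omega_h$, so that $d\eta_{\psi,b}=d\omega'-d\omega=0$. The only discrepancy is an immaterial sign convention in the structure equation (the paper writes $d\omega=-\Omega_h$ where you write $d\omega=\Omega_h$), which cancels in the difference regardless.
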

\begin{proof}
By the well-known formula for the curvature form, one has 
$$d\omega=-\Omega_h\,,$$
and analogously
$$d\omega'=-\Omega_{\psi^*h'}=-\Omega_h\,,$$
since $\psi$ is a symplectomorphism.
Therefore $d\eta_{\psi,b}=d\omega'-d\omega=0$.
\end{proof}

Unfortunately, the cohomology class of $\eta_{\psi,b}$ defined in this way is not independent of the choice of $b$, once the symplectomorphism $\psi$ isotopic to the identity is fixed. However, it can only differ in a controlled way, namely by cohomology classes with integer periods:


\begin{lemma} \label{lemma integer coefficients}
Let $b$ and $\widehat b$ be smooth sections of $\isom(TS,\psi^*h',h)$. Then 
$$[\eta_{\psi,b}]-[\eta_{\psi,\widehat b}]\in H^1_\dR(S,2\pi\Z)~.$$
\end{lemma}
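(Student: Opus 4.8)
The plan is to exploit the fact that the $h$-connection form $\omega$ appearing in the definition of $\eta_{\psi,b}$ does not involve $b$ at all: it is the connection form of the Levi-Civita connection $\nabla$ of $h$ relative to a fixed oriented orthonormal frame $\{v_1,v_2\}$. Hence, working on a simply connected open set $U$ and using the \emph{same} frame $\{v_1,v_2\}$ to compute both $\eta_{\psi,b}$ and $\eta_{\psi,\widehat b}$, the terms $-\omega$ cancel and one is left with
$$\eta_{\psi,b}-\eta_{\psi,\widehat b}=\omega'-\widehat\omega'\,,$$
where $\omega'$ and $\widehat\omega'$ are the connection forms of one and the same connection, namely the Levi-Civita connection $\nabla'$ of $\psi^*h'$, computed with respect to the two $\psi^*h'$-orthonormal frames $\{b^{-1}v_1,b^{-1}v_2\}$ and $\{\widehat b^{-1}v_1,\widehat b^{-1}v_2\}$. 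Thus everything reduces to understanding how the connection form of $\nabla'$ changes under this particular change of orthonormal frame.

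First I would observe that the two frames differ by the section $A:=\widehat b^{-1} b$ of $\mathrm{End}(TS)$, in the sense that $\widehat b^{-1}v_i=A(b^{-1}v_i)$; moreover $A$ is a globally defined, orientation-preserving isometry of $(TS,\psi^*h')$, since $b$ and $\widehat b$ are both orientation-preserving isometries between $\psi^*h'$ and $h$. Thus $A$ is a rotation field, i.e.\ on each simply connected $U$ it equals $R'_\theta$ for a smooth angle $\theta\colon U\to\R$, determined up to a constant in $2\pi\Z$. The essential point is that, because $\SO(2)$ is abelian, the rotation angle of $A$ is independent of the orthonormal frame used to measure it; hence $\theta$ patches to a globally well-defined map $S\to\R/2\pi\Z$, and $d\theta$ is a globally defined closed $1$-form on $S$.

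Next I would reuse verbatim the frame-rotation computation already carried out in the proof of the well-definedness lemma: if one orthonormal frame is obtained from another by applying the rotation $R'_\theta$, then the corresponding connection forms of $\nabla'$ differ exactly by $d\theta$, so that $\widehat\omega'=\omega'+d\theta$. Substituting gives $\eta_{\psi,b}-\eta_{\psi,\widehat b}=-d\theta$ globally on $S$. To conclude, I would compute the periods: for any loop $\gamma$ in $S$ one has $\int_\gamma d\theta=2\pi\,\deg(\theta|_\gamma)\in 2\pi\Z$, since $\theta$ is $\R/2\pi\Z$-valued and $d\theta$ is the pullback of the standard angular form under $\theta$. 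Hence $[\eta_{\psi,b}]-[\eta_{\psi,\widehat b}]=-[d\theta]\in H^1_\dR(S,2\pi\Z)$, as claimed.

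The only genuinely delicate step is the middle one: verifying that the locally defined angle $\theta$ assembles into a single circle-valued function on $S$ (equivalently, that $A$ defines a global section of the $\SO(2)$-bundle of $\psi^*h'$-rotations whose total angle is frame-independent), and that consequently the periods of $d\theta$ are integer multiples of $2\pi$ rather than arbitrary real numbers. Everything else is either the cancellation of the $h$-term $\omega$ or a direct appeal to the connection-form transformation already established above.
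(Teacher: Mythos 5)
Your proof is correct and takes essentially the same route as the paper: the paper likewise encodes the discrepancy between $b$ and $\widehat b$ as a rotation by a circle-valued angle (written there as $\widehat b=R_\vartheta\circ b$ with $\vartheta:S\to\mathbb S^1$, which after conjugation is exactly your $A=\widehat b^{-1}b=R'_{-\vartheta}$), and reuses the frame-rotation computation of the well-definedness lemma to conclude $\widehat\omega'=\omega'-d\vartheta$. Your period computation via the degree of $\theta$ restricted to loops is the same fact the paper phrases as $F(\vartheta)=2\pi\vartheta_*\in\Hom(\pi_1(S),2\pi\Z)$, so the two arguments coincide in substance.
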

Let us first explain the meaning of $H^1_\dR(S,2\pi\Z)$. Recall that there is a standard isomorphism $e:H^1_\dR(S,\R)\to\Hom(\pi_1(S),\R)$, which is defined by
$$e([\eta]):[\gamma]\mapsto\int_\gamma \eta~.$$
Then we denote $H^1_\dR(S,2\pi\Z)$ as the subgroup of $H^1_\dR(S,\R)$ such that 
$$H^1_\dR(S,2\pi\Z)=e^{-1}(\Hom(\pi_1(S),2\pi\Z))~.$$ That is, $H^1_\dR(S,2\pi\Z)$ consists of cohomology classes of 1-forms with periods which are integer multiples of $2\pi$.

\begin{proof}[Proof of Lemma \ref{lemma integer coefficients}]
Given $b$ and $\widehat b$, there exists a smooth function $\vartheta:S\to\mathbb S^1$ such that $\widehat b=R_\vartheta\circ b$. Consider the composition of vector space homomorphisms
\[
\xymatrix{
C^\infty(S,\mathbb S^1)\ar[r] &H^1_\dR(S,\R)\ar[r]^{\!\!\!\!\!\!\!\!\!e} &\Hom(\pi_1(S),\R)
}
\]
where the first arrow is
$$\vartheta\mapsto [d\vartheta]~,$$
as $d\vartheta$ is locally well-defined as a 1-form.

Let us observe that the composition 
$$F:C^\infty(S,\mathbb S^1)\to\Hom(\pi_1(S),\R)$$can be expressed as 
$$F(\vartheta)=2\pi \vartheta_*:\pi_1(S)\to \R~,$$
where $\vartheta_*$ is the map induced by $\vartheta$ from $\pi_1(S)$ to $\pi_1(\mathbb S^1)\cong \Z<\R$. Hence $F$
has image in $\Hom(\pi_1(S),2\pi\Z)$. 


Now, pick an oriented orthonormal frame $\{v_1,v_2\}$ for $h$. Then let $\widehat \omega'$ be the connection form of $\psi^*h'$ with respect to the frame
$$\{\widehat v_1',\widehat v_2'\}=\{\widehat b^{-1}v_1,\widehat b^{-1}v_2\}=\{b^{-1}R_{-\vartheta}v_1,b^{-1}R_{-\vartheta}v_2\}~,$$
which coincides with
$$\{R_{-\vartheta}'b^{-1}v_1,R_{-\vartheta}'b^{-1}v_2\}=\{R_{-\vartheta}'v_1',R_{-\vartheta}'v_2'\}~.$$ 

If $\omega'$ is the connection form of $\psi^*h'$ with respect to the frame
$\{v_1',v_2'\}$, by the same computation as in Lemma \ref{eta well defined}, we have $\widehat\omega'=\omega'-d\vartheta$. Therefore $\eta_{\psi,b}$ and $\eta_{\psi,b'}$ differ by the 1-form $d\vartheta$, which has integer coefficients as remarked above. 
\end{proof}

\begin{remark}
From the construction in the above proof, it follows that, given $b$ and $\widehat b$, $[\eta_{\psi,b}]=[\eta_{\psi,\widehat b}]$ in $H^1_\dR(S,\R)$ if and only if 
$$\widehat b=R_\theta\circ b$$
for $\theta:S\to\R$. That is, if and only if the function $\vartheta:S\to\mathbb S^1$ above can be lifted to a $\R$-valued map. 
\end{remark}

Lemma \ref{lemma integer coefficients} enables us to give the following definition:

\begin{defi} \label{defi boh}
Given two hyperbolic metrics $h$ and $h'$ on $S$ with area forms $\Omega_h$ and $\Omega_{h'}$, we define
$$\boh_{h,h'}:\symp_0(S,\Omega_h,\Omega_{h'})\to H^1_{\dR}(S,\R)/H^1_{\dR}(S,2\pi\Z)$$
by $\boh_{h,h'}(\psi):=[\eta_{\psi,b}]$, where $b$ is any smooth section of
 $\isom(TS,\psi^*h',h)$.
\end{defi}

\subsection*{Some properties of the 1-form $\eta_{\psi,b}$.}

We will now derive an equivalent expression for the previously defined 1-form $\eta_{\psi,b}$, which will be used later.

\begin{lemma} \label{lemma difference connections}
Let $h$ be a hyperbolic metric on $S$, let $b\in\Gamma^\infty(\mathrm{End}(TS))$ non-singular. If $\nabla$ is the Levi-Civita connection of $h$ and $\nabla'$ is the Levi-Civita connection of $h'=h(b\cdot,b\cdot)$, then:
\begin{equation}  \label{eq form twisted connection}
\nabla'_v w=b^{-1}\nabla_v b(w)+\alpha(v)J_{h'}(w)~,
\end{equation}
where $J_{h'}$ is the almost-complex structure induced by the metric $h'$ and $\alpha$ is the 1-form defined by
$$\alpha(v)=h(b^{-1}J_h (\star d^\nabla b),J_h v)~,$$
if $J_h$ is the almost-complex structure induced by $h$.
\end{lemma}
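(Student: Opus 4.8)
The plan is to compare $\nabla'$ not with $\nabla$ directly, but with the auxiliary connection $\widetilde\nabla$ defined by $\widetilde\nabla_v w:=b^{-1}\nabla_v(bw)$, i.e. the pullback of $\nabla$ under the bundle automorphism $b$. First I would check that $\widetilde\nabla$ is a metric connection for $h'=h(b\cdot,b\cdot)$: since $h'(\widetilde\nabla_v w_1,w_2)=h(\nabla_v(bw_1),bw_2)$ and similarly for the second slot, one gets $h'(\widetilde\nabla_v w_1,w_2)+h'(w_1,\widetilde\nabla_v w_2)=v\big(h'(w_1,w_2)\big)$ from metricity of $\nabla$. Because $\nabla'$ is also metric for $h'$, the difference $\nabla'-\widetilde\nabla$ is a $1$-form valued in the bundle of $h'$-skew-symmetric endomorphisms of $TS$; as $S$ is a surface this bundle is a real line bundle spanned by $J_{h'}$, so necessarily $\nabla'_v w=\widetilde\nabla_v w+\alpha(v)J_{h'}(w)$ for some $1$-form $\alpha$. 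Since $\widetilde\nabla_v w=b^{-1}\nabla_v b(w)$, this already produces the shape of the claimed identity, and it only remains to identify $\alpha$.

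To pin down $\alpha$ I would use that $\nabla'$ is torsion-free. A short computation, relying only on torsion-freeness of $\nabla$ and on the definition of the exterior covariant derivative of $b$ regarded as a $TS$-valued $1$-form, shows that the torsion of the auxiliary connection is $T^{\widetilde\nabla}(v,w)=b^{-1}\,\dn b(v,w)$. Imposing $T^{\nabla'}=0$ on $\nabla'=\widetilde\nabla+\alpha\otimes J_{h'}$ then gives the tensorial equation $b^{-1}\,\dn b(v,w)=\alpha(w)J_{h'}v-\alpha(v)J_{h'}w$. Evaluating both sides on an oriented $h'$-orthonormal frame $\{e_1,e_2\}$, for which $J_{h'}e_1=e_2$, collapses the right-hand side to $\alpha(e_1)e_1+\alpha(e_2)e_2$, and therefore $\alpha(v)=h'\big(b^{-1}(\star_{h'}\dn b),\,v\big)$, where $\star_{h'}\dn b=\dn b(e_1,e_2)$ denotes the $h'$-Hodge star of the $TS$-valued $2$-form $\dn b$.

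The remaining, and most delicate, step is to rewrite this intrinsic expression in the stated form, which is phrased purely in terms of $h$, $J_h$ and the $h$-Hodge star $\star=\star_h$. Two conversions are needed. First, since $b$ is orientation-preserving and carries an oriented $h$-orthonormal frame to an oriented $h'$-orthonormal one, evaluating any alternating $TS$-valued $2$-form yields $\star_{h'}\dn b=(\det b)^{-1}\star_h\dn b$; writing $X:=\star_h\dn b$ and using $h'(u,w)=h(bu,bw)$, the formula above becomes $\alpha(v)=(\det b)^{-1}h(X,bv)$. Second, I would invoke the two-dimensional adjugate identity $(\det b)\,b^{-1}=J_h\,b^{*}\,J_h^{-1}$, with $b^{*}$ the $h$-adjoint of $b$; substituting $b^{-1}=(\det b)^{-1}J_h b^{*}J_h^{-1}$ into the claimed right-hand side and using that $J_h$ is an $h$-isometry together with $h(b^{*}X,v)=h(X,bv)$ gives $h\big(b^{-1}J_h X,\,J_h v\big)=(\det b)^{-1}h(X,bv)$, which matches the expression for $\alpha$ just obtained. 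The hard part will be precisely this bookkeeping between the two metrics: keeping the orientation conventions consistent, and observing that the factor $\det b$ produced by passing from $\star_{h'}$ to $\star_h$ is exactly cancelled by the cofactor identity relating $b^{-1}$, $b^{*}$ and $J_h$, so that the final expression depends on $h$ and $b$ alone in the clean form stated.
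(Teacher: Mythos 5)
Your proposal is correct and follows essentially the same route as the paper: both replace $\nabla$ by the auxiliary connection $b^{-1}\nabla(b\,\cdot)$, note that its difference with $\nabla'$ is a $1$-form valued in $h'$-skew-symmetric endomorphisms (hence a multiple of $J_{h'}$), and pin down $\alpha$ by imposing that $\nabla'$ is torsion-free, which produces the same tensorial identity relating $d^\nabla b$ and $\alpha$. The only divergence is in the final linear algebra: the paper solves that identity directly on an $h$-orthonormal frame by inverting $J_h b J_h$, whereas you solve it on an $h'$-orthonormal frame and then translate back via the rescaling $\star_{h'} d^\nabla b=(\det b)^{-1}\star_h d^\nabla b$ and the two-dimensional adjugate identity $(\det b)\,b^{-1}=J_h b^{*}J_h^{-1}$ --- correct, but a longer bookkeeping than the paper's one-step inversion.
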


Recall that $d^\nabla b$ denotes the exterior derivative, which is the 2-form with values in $TS$ defined by:
\begin{equation} \label{eq exterior derivative}
d^\nabla b(v,w)=\nabla_v (b(w))-\nabla_w (b(v))-b[v,w]~,
\end{equation}
where $v$ and $w$ are any two vector fields. Furthermore, $\star d^\nabla b$ is the Hodge dual of the 2-form $d^\nabla b$, which can be defined as 
$$\star d^\nabla b=d^\nabla b(v_1,v_2)~,$$
where $\{v_1,v_2\}$ is an oriented orthonormal frame (and this expression does not depend on the chosen oriented orthonormal frame).

\begin{proof}[Proof of Lemma \ref{lemma difference connections}]
Observe that $b^{-1}\nabla b$ is a connection on $S$ compatible with the metric $h'$ --- but not symmetric in general. Hence the difference between $\nabla'$ and $b^{-1}\nabla b$ is a 1-form with values in the bundle of $h'$-skew-symmetric endomorphisms of $TS$. Hence we can write
\begin{equation*} 
\nabla'_v w=b^{-1}\nabla_v b(w)+\alpha(v)J_{h'}(w)~,
\end{equation*}
where $\alpha$ is a 1-form and 
\begin{equation} \label{eq complex structure twisted}
J_{h'}=b^{-1}J_h b
\end{equation}
 is the almost-complex structure induced by the metric $h'$. Then from the definitions, we have
\begin{align*}
d^\nabla b(v,w)&=\nabla_v b(w)-\nabla_w b(v)-b[v,w] \\
&=(b\nabla'_v w-\alpha(v)bJ_{h'}(w)) -(b\nabla'_w v-\alpha(w)bJ_{h'}(v))-b[v,w] \\
&=b(\nabla'_v w-\nabla'_w v-[v,w])-\alpha(v)J_h b(w)+\alpha(w)J_h b(v) \\
&=-\alpha(v)J_h b(w)+\alpha(w)J_h b(v)~,
\end{align*}
since we are assuming $\nabla'$ is torsion-free.

Let us choose an oriented orthonormal frame $\{v_1,v_2\}$ for $h$. Recall that the Hodge dual of the 2-form $d^\nabla b$ equals $\star d^\nabla b=d^\nabla b(v_1,v_2)$. Then 
\begin{align*}
\star d^\nabla b&=-\alpha(v_1)J_h bJ_h (v_1)-\alpha(v_2)J_h bJ_h (v_2) \\
&=-(J_h bJ_h )(\alpha(v_1)v_1+\alpha(v_2)v_2)~.
\end{align*}
Therefore, for every vector $v$,
\begin{align*}
\alpha(v)&=-h((J_h bJ_h )^{-1}(\star d^\nabla b),v) \\
&=-h(J_h b^{-1}J_h (\star d^\nabla b),v) \\
&=h(b^{-1}J_h (\star d^\nabla b),J_h v)~.
\end{align*}
This concludes the proof.
\end{proof}

Using Equation \eqref{eq form twisted connection}, we can now express the 1-form $\eta_{\psi,b}$ in the following way: if $\{v_1,v_2\}$ is an oriented orthonormal frame, then:
\begin{align*}
\eta_{\psi,b}(w)&=\omega'(w)-\omega(w) \\
&=h'(\nabla'_w b^{-1}v_1,b^{-1}v_2)-h(\nabla_w v_1,v_2) \\
&=h(b\nabla'_w b^{-1}v_1,v_2)-h(\nabla_w v_1,v_2) \\
&=h(\alpha(w)J_h v_1,v_2)=\alpha(w)~,
\end{align*}
since $J_h v_1=v_2$, and we have used Equation \eqref{eq complex structure twisted}. Therefore, using Lemma \ref{lemma difference connections}, we have the following:

\begin{cor} \label{cor altra espressione eta}
Given two hyperbolic metrics $h$ and $h'$ on $S$ with area forms $\Omega_h$ and $\Omega_{h'}$, a symplectomorphism $\psi\in\symp_0(S,\Omega_h,\Omega_h')$, and a smooth section $b$ of $\isom(TS,\psi^*h',h)$, 
\begin{equation} \label{eq altra espressione eta}
\eta_{\psi,b}=h(b^{-1}J_h(\star d^\nabla b),J_h\cdot)~,
\end{equation}
\end{cor}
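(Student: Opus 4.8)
The plan is to obtain the corollary as an essentially immediate consequence of Lemma~\ref{lemma difference connections}, by unwinding the definition of $\eta_{\psi,b}$ as the difference $\omega'-\omega$ of connection forms and then substituting the formula \eqref{eq form twisted connection} for $\nabla'$. The whole computation can be carried out pointwise in a single oriented $h$-orthonormal frame $\{v_1,v_2\}$, which is legitimate because Lemma~\ref{eta well defined} guarantees that the resulting $1$-form does not depend on this choice.

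Concretely, I would fix $\{v_1,v_2\}$ and recall that $\{v_1',v_2'\}=\{b^{-1}v_1,b^{-1}v_2\}$ is then an oriented $\psi^*h'$-orthonormal frame, so that $\eta_{\psi,b}(w)=\omega'(w)-\omega(w)=h'(\nabla'_w(b^{-1}v_1),b^{-1}v_2)-h(\nabla_w v_1,v_2)$. Using $\psi^*h'=h(b\cdot,b\cdot)$, the first term becomes $h(b\,\nabla'_w(b^{-1}v_1),v_2)$. I would then apply \eqref{eq form twisted connection} to the section $b^{-1}v_1$ in the direction $w$: the term $b^{-1}\nabla_w b(b^{-1}v_1)=b^{-1}\nabla_w v_1$ contributes, after applying $b$ and pairing with $v_2$, exactly $h(\nabla_w v_1,v_2)=\omega(w)$, which cancels the second term; while the remaining term $\alpha(w)\,bJ_{h'}(b^{-1}v_1)$ simplifies to $\alpha(w)J_h v_1=\alpha(w)v_2$ by \eqref{eq complex structure twisted} and the normalization $J_h v_1=v_2$. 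Pairing with $v_2$ leaves $\eta_{\psi,b}(w)=\alpha(w)$, and substituting the explicit expression $\alpha(v)=h(b^{-1}J_h(\star d^\nabla b),J_h v)$ from Lemma~\ref{lemma difference connections} yields \eqref{eq altra espressione eta}.

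There is no genuine obstacle here; the content of the corollary is entirely carried by Lemma~\ref{lemma difference connections}. The only point requiring care is the bookkeeping of the two metrics and of the conjugation $J_{h'}=b^{-1}J_h b$: one must verify that the ``symmetric part'' $b^{-1}\nabla b$ of the twisted connection is precisely what cancels $\omega$, so that the entire difference $\omega'-\omega$ is captured by the skew-symmetric correction term $\alpha(w)J_{h'}$. I would double-check this by writing all inner products explicitly with respect to $h$ (converting every $h'$-pairing via $b$ at the outset), which makes the cancellation transparent and avoids conflating the Levi-Civita connections of the two metrics.
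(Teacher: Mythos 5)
Your proposal is correct and follows essentially the same route as the paper: both unwind $\eta_{\psi,b}(w)=\omega'(w)-\omega(w)$ in an $h$-orthonormal frame, convert the $\psi^*h'$-pairing to an $h$-pairing via $b$, substitute Equation~\eqref{eq form twisted connection} so that the $b^{-1}\nabla b$ term cancels $\omega(w)$, and use $J_{h'}=b^{-1}J_h b$ together with $J_h v_1 = v_2$ to be left with $\alpha(w)$. The bookkeeping points you flag (conjugating $J_{h'}$ and converting all pairings to $h$ at the outset) are precisely the steps the paper carries out.
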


Observe that, from Equation \eqref{eq altra espressione eta}, $\eta_{\psi,b}$ vanishes if and only if $\star d^\nabla b$ vanishes identically, which is equivalent to $d^\nabla b=0$. 
Hence from Corollary \ref{cor altra espressione eta}, we obtain:

\begin{cor} \label{cor vanishing codazzi condition}
Given two hyperbolic metrics $h$ and $h'$ on $S$ with area forms $\Omega_h$ and $\Omega_{h'}$, a symplectomorphism $\psi\in\symp_0(S,\Omega_h,\Omega_h')$, and a smooth section $b$ of $\isom(TS,\psi^*h',h)$, 
$$\eta_{\psi,b}=0\qquad\text{if and only if}\qquad
d^{\nabla}b=0~.$$
\end{cor}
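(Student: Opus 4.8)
The plan is to read the equivalence directly off the explicit formula \eqref{eq altra espressione eta} of Corollary \ref{cor altra espressione eta}, namely
$\eta_{\psi,b}=h(b^{-1}J_h(\star d^\nabla b),J_h\cdot)$, so that essentially all the analytic content has already been absorbed into that identity. First I would dispose of the trivial implication: if $d^\nabla b=0$, then its Hodge dual $\star d^\nabla b=d^\nabla b(v_1,v_2)$ vanishes identically for any oriented orthonormal frame $\{v_1,v_2\}$, and substituting into the formula yields $\eta_{\psi,b}=0$ immediately.

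For the converse, I would assume $\eta_{\psi,b}=0$, that is $h(b^{-1}J_h(\star d^\nabla b),J_h v)=0$ for every tangent vector $v$, and peel off the invertible factors one at a time. Since $J_h$ is a pointwise linear isomorphism of each tangent space, as $v$ ranges over $T_xS$ so does $J_h v$; hence $h(b^{-1}J_h(\star d^\nabla b),\cdot)=0$, and nondegeneracy of $h$ forces $b^{-1}J_h(\star d^\nabla b)=0$. Because $b$ (and therefore $b^{-1}$) and $J_h$ are invertible endomorphisms of $T_xS$, this gives $\star d^\nabla b=0$ at every point.

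It then remains to observe that on a surface the Hodge star identifies $TS$-valued $2$-forms with $TS$-valued $0$-forms, so that $\star d^\nabla b=0$ is equivalent to $d^\nabla b=0$. Concretely, since the space of alternating $2$-forms on a two-dimensional vector space is one-dimensional, both $d^\nabla b(v,w)$ and $\Omega_h(v,w)\,(\star d^\nabla b)$ are alternating and bilinear in $(v,w)$ and agree on an oriented orthonormal frame (where $\Omega_h(v_1,v_2)=1$), whence $d^\nabla b(v,w)=\Omega_h(v,w)\,(\star d^\nabla b)$ for all $v,w$. Thus the vanishing of the section $\star d^\nabla b$ is equivalent to the vanishing of the $TS$-valued $2$-form $d^\nabla b$, closing the chain of equivalences $\eta_{\psi,b}=0\iff\star d^\nabla b=0\iff d^\nabla b=0$.

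I do not expect a genuine obstacle here: the corollary is a formal consequence of the identity \eqref{eq altra espressione eta}, and the only facts to check are the pointwise linear-algebra statements that $h$ is nondegenerate and that $J_h,\,b$ are invertible, together with the elementary identification of $2$-forms with vector fields in dimension two. The substantive work was already carried out in establishing Lemma \ref{lemma difference connections} and Corollary \ref{cor altra espressione eta}, so the present statement is purely a matter of unwinding that formula.
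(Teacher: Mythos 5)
Your proposal is correct and follows exactly the paper's route: the paper derives this corollary in one line from Equation \eqref{eq altra espressione eta}, observing that $\eta_{\psi,b}$ vanishes if and only if $\star d^\nabla b$ vanishes, which is equivalent to $d^\nabla b=0$. You have merely made explicit the pointwise linear algebra (nondegeneracy of $h$, invertibility of $b$ and $J_h$, and the identification of $2$-forms with $0$-forms in dimension two) that the paper leaves implicit.
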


\subsection*{The group structure}

Let us now study how the map $\boh_{h,h'}$ (defined in Definition \ref{defi boh}) transforms under composition of symplectomorphisms. In particular, this will prove that $\boh_{h,h}:\symp_0(S,\Omega_h)\to H^1_{\dR}(S,\R)/H^1_{\dR}(S,2\pi\Z)$ is a group homomorphism.

\begin{lemma} \label{composition formula}
Let $h$ and $h'$ be hyperbolic metrics on $S$. Then
for $\psi\in\symp_0(S,\Omega_h)$ and $\widehat\psi\in\symp_0(S,\Omega_h,\Omega_{h'})$,
$$\boh_{h,h'}(\widehat\psi\circ\psi)=\boh_{h,h}(\psi)+\boh_{h,h'}(\widehat\psi)~.$$
\end{lemma}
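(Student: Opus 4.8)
The plan is to reduce the composition formula to an identity at the level of the 1-forms $\eta$ introduced before Lemma \ref{eta well defined}, exploiting two facts: these forms are defined metric-by-metric from Levi-Civita connection forms, and $\psi$, being isotopic to the identity, acts trivially on first cohomology. Since $\boh_{h,h'}(\psi)$ is independent of the chosen section $b$, I am free to fix $b_1\in\Gamma^\infty(\isom(TS,\psi^*h,h))$ and $b_2\in\Gamma^\infty(\isom(TS,\widehat\psi^*h',h))$, and the central construction is to manufacture from them a section realizing $\boh_{h,h'}(\widehat\psi\circ\psi)$. Writing $\psi^*b_2=(d\psi)^{-1}(b_2\circ\psi)(d\psi)$ for the natural pullback of the endomorphism field, I would set $b_3=b_1\circ(\psi^*b_2)$. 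Using $(\widehat\psi\circ\psi)^*h'=\psi^*(\widehat\psi^*h')$ together with $\widehat\psi^*h'=h(b_2\cdot,b_2\cdot)$ and $\psi^*h=h(b_1\cdot,b_1\cdot)$, a direct computation gives $(\widehat\psi\circ\psi)^*h'=h(b_3\cdot,b_3\cdot)$; moreover $b_3$ is orientation-preserving because $\psi$ is (hence $\psi^*b_2$ has positive determinant), so $b_3$ is a legitimate section for $\boh_{h,h'}(\widehat\psi\circ\psi)$.

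Next I would establish the key naturality statement: if $g$ is a metric on $S$ with an oriented $g$-orthonormal frame $\{e_i\}$ and connection form $\omega^g$, then, since $\psi\colon(S,\psi^*g)\to(S,g)$ is an orientation-preserving isometry, the connection form of $\psi^*g$ with respect to the pulled-back frame $\{\psi^*e_i\}$ equals $\psi^*\omega^g$. This is immediate from the invariance of the Levi-Civita connection under isometries, i.e.\ $\psi_*(\nabla^{\psi^*g}_X Y)=\nabla^g_{\psi_*X}\psi_*Y$, combined with $\psi_*\psi^*e_i=e_i$. Then, using the frame-independence of $\eta$ from Lemma \ref{eta well defined}, I would compute all three forms $\eta_1=\eta_{\psi,b_1}$, $\eta_2=\eta_{\widehat\psi,b_2}$ and $\eta_3=\eta_{\widehat\psi\circ\psi,b_3}$ with compatible frames. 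Starting from an oriented $h$-orthonormal frame $\{v_i\}$ on an open set $U$, I evaluate $\eta_3$ and $\eta_1$ on $\psi^{-1}(U)$ using the $h$-orthonormal frame $\{b_1\psi^*v_i\}$, whose associated $g_3$-frame is $\{\psi^*(b_2^{-1}v_i)\}$ and whose associated $\psi^*h$-frame is $\{\psi^*v_i\}$. Applying the naturality statement to $g=h$ and to $g=\widehat\psi^*h'$ identifies the relevant connection forms as pullbacks, and the resulting bookkeeping yields the pointwise identity $\eta_3=\eta_1+\psi^*\eta_2$ of global 1-forms.

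Finally, passing to de Rham classes gives $[\eta_3]=[\eta_1]+[\psi^*\eta_2]$ in $H^1_\dR(S,\R)$. Since $\psi\in\symp_0(S,\Omega_h)$ is isotopic to the identity, $\psi^*$ acts as the identity on $H^1_\dR(S,\R)$, so $[\psi^*\eta_2]=[\eta_2]$, whence $[\eta_3]=[\eta_1]+[\eta_2]$; this descends to the claimed equality $\boh_{h,h'}(\widehat\psi\circ\psi)=\boh_{h,h}(\psi)+\boh_{h,h'}(\widehat\psi)$ in the quotient $H^1_\dR(S,\R)/H^1_\dR(S,2\pi\Z)$. I expect the main obstacle to be the careful frame bookkeeping in the middle step: one must select the oriented orthonormal frames for the four metrics $h$, $\psi^*h$, $\widehat\psi^*h'$ and $\psi^*\widehat\psi^*h'$ so that the pullback naturality applies simultaneously and the three forms line up with the correct signs, and one must verify that $\psi^*b_2$ genuinely defines an orientation-preserving isometry between the metrics at hand so that $b_3$ is admissible.
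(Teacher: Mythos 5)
Your proof is correct and takes essentially the same route as the paper: you build the composite section $b_3=b_1\circ(\psi^*b_2)$ (the paper's $b\circ\psi^*\widehat b$), establish the pointwise identity $\eta_{\widehat\psi\circ\psi,\,b_3}=\eta_{\psi,b_1}+\psi^*\eta_{\widehat\psi,b_2}$ via naturality of Levi-Civita connection forms under the isometry $\psi$, and conclude with $[\psi^*\eta_{\widehat\psi,b_2}]=[\eta_{\widehat\psi,b_2}]$ since $\psi$ is isotopic to the identity. The only (cosmetic) difference is that you pull frames back along $\psi$ while the paper implicitly pushes them forward; your bookkeeping is, if anything, slightly more explicit than the paper's.
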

\begin{proof}
Let $b$ and $\widehat b$ be smooth sections of $\mathrm{End}(TS)$ such that
$\widehat\psi^*h'=h(\widehat b\cdot,\widehat b\cdot)$ and $\psi^*h=h(b\cdot,b\cdot)$. 
 Let us denote $\psi^*\widehat b=(d\psi)^{-1}\widehat b(d\psi)$. Then $\psi^*\widehat b\in\Gamma^\infty(\isom_\Sigma(\psi^*h,\psi^*\widehat\psi^*h'))$ and
$$(\widehat\psi \psi)^*h'=h((b\circ \psi^*\widehat b)\cdot,(b\circ \psi^*\widehat b)\cdot)\,.$$

To compute $[\eta_{\widehat\psi\psi}]$, let us pick an orthonormal frame $\{v_1,v_2\}$ for $h$, let $\omega$ be the associated connection form and $\omega''$ the connection form of $(\widehat\psi\psi)^*h'$ with respect to the frame $\{(b\circ \psi^*\widehat b)^{-1}v_1,(b\circ \psi^*\widehat b)^{-1}v_2\}$. Let $\omega'$ be the connection form of $\psi^*h$ for the frame $\{b^{-1}v_1,b^{-1}v_2\}$. Then
$$\eta_{\widehat\psi\psi,(b\circ \psi^*\widehat b)}=\omega''-\omega=(\omega''-\omega')+(\omega'-\omega)\,.$$
Now clearly $\omega'-\omega=\eta_{\psi,b}$. On the other hand, $\omega'$ and $\omega''$ are the pull-backs, via $\psi$, of the connection forms of $h$ and $\widehat\psi^*h'$, and thus $\omega''-\omega'=\psi^*\eta_{\widehat\psi,\widehat b}$. This shows that:
$$\eta_{\widehat\psi\psi,(b\circ \psi^*\widehat b)}=\eta_{\psi,b}+\psi^*\eta_{\widehat\psi,\widehat b}\,.$$
Taking cohomology classes, $[\psi^*\eta_{\widehat\psi,\widehat b}]=[\eta_{\widehat\psi,\widehat b}]$ since $\psi$ is isotopic to the identity, and therefore
$$\boh_{h,h'}(\widehat\psi\circ\psi)=\boh_{h,h}(\psi)+\boh_{h,h'}(\widehat\psi)\,,$$
which concludes the proof.
\end{proof}

In particular, if we choose $h=h'$, then Lemma \ref{composition formula} shows that, if $\psi,\widehat \psi\in\symp_0(S,\Omega_h)$,
$$\boh_{h,h}(\widehat\psi\circ\psi)=\boh_{h,h}(\psi)+\boh_{h,h'}(\widehat\psi)~.$$
Hence $\boh_{h,h}:\symp_0(S,\Omega_h)\to H^1_\dR(S,\R)/H^1_{\dR}(S,2\pi\Z)$ is a group homomorphism. Moreover, if $\psi$ varies smoothly in $\symp_0(S,\Omega_h)$, then the associated form $[\eta_{\psi,b}]$ varies smoothly. Hence $\boh_{h,h}$ is smooth, for the structure of infinite-dimensional Lie group of $\symp_0(S,\Omega_h)$. To summarize, we have:

\begin{cor} \label{lie group homo}
Given a hyperbolic metric $h$ on $S$, the map
$$\boh_{h,h}:\symp_0(S,\Omega_h)\to H^1_{\dR}(S,\R)/H^1_{\dR}(S,2\pi\Z)$$
is a Lie group homomorphism. 
\end{cor}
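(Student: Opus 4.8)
The plan is to deduce the two properties that make up the assertion---that $\boh_{h,h}$ is a group homomorphism, and that it is smooth---separately, the first being essentially immediate from the composition formula and the second being the point that requires genuine care.

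First I would obtain the homomorphism property by specializing Lemma \ref{composition formula} to $h'=h$. Since $h'=h$ forces $\Omega_{h'}=\Omega_h$, both $\psi$ and $\widehat\psi$ now lie in $\symp_0(S,\Omega_h)$, and the formula reads
\[
\boh_{h,h}(\widehat\psi\circ\psi)=\boh_{h,h}(\psi)+\boh_{h,h}(\widehat\psi)~.
\]
Because the target $H^1_{\dR}(S,\R)/H^1_{\dR}(S,2\pi\Z)$ is an abelian group, this is precisely the homomorphism identity; in particular it sends $\mathrm{id}$ to $0$, which one can also check directly by taking $b=\mathrm{Id}$ (so that $d^\nabla b=0$) and invoking Corollary \ref{cor vanishing codazzi condition}. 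Note that the target is a torus: $H^1_{\dR}(S,2\pi\Z)$ is a full-rank lattice in $H^1_{\dR}(S,\R)\cong\R^{2g}$, so the quotient is a finite-dimensional Lie group.

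The substantial point is smoothness, and the key is to exhibit a \emph{canonical} smooth choice of the section $b=b(\psi)$. Given $\psi\in\symp_0(S,\Omega_h)$, set $g=\psi^*h$ and let $A=A(\psi)$ be the $h$-self-adjoint, positive-definite endomorphism of $TS$ determined by $g(v,w)=h(Av,w)$; then define $b(\psi):=\sqrt{A(\psi)}$, the fiberwise symmetric positive-definite square root. This $b(\psi)$ is $h$-self-adjoint and positive-definite, hence orientation-preserving, and satisfies $h(b(\psi)\cdot,b(\psi)\cdot)=\psi^*h$, so it is a legitimate section of $\isom(TS,\psi^*h,h)$. Since $\psi\mapsto A(\psi)$ is smooth and the symmetric positive-definite square root depends smoothly on its argument, $b(\psi)$ varies smoothly with $\psi$. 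Feeding this family into the closed expression of Corollary \ref{cor altra espressione eta}, $\eta_{\psi,b(\psi)}=h\bigl(b(\psi)^{-1}J_h(\star d^\nabla b(\psi)),J_h\,\cdot\bigr)$, shows that $\eta_{\psi,b(\psi)}$ is a family of closed $1$-forms depending smoothly on $\psi$ and on the point of $S$. Consequently its periods $\int_\gamma\eta_{\psi,b(\psi)}$ depend smoothly on $\psi$, so the de Rham class $[\eta_{\psi,b(\psi)}]\in H^1_{\dR}(S,\R)$ is a smooth function of $\psi$; post-composing with the smooth projection to $H^1_{\dR}(S,\R)/H^1_{\dR}(S,2\pi\Z)$ yields smoothness of $\boh_{h,h}$. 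By Lemma \ref{lemma integer coefficients} this quotient class is independent of the chosen $b$, so replacing $b$ by the canonical $b(\psi)$ does not alter the value.

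I expect the main obstacle to lie in the smoothness bookkeeping rather than in the algebra: one must interpret ``smooth'' correctly for the infinite-dimensional Fréchet Lie group $\symp_0(S,\Omega_h)$, check that the square-root construction produces a section that is jointly smooth in the pair $(\psi,x)$, and confirm that the passage from a smoothly varying closed $1$-form to its de Rham class (via integration over a fixed basis of cycles) is smooth. The canonical choice $b(\psi)=\sqrt{A(\psi)}$ is what makes all of this go through uniformly over $S$ and over $\psi$ simultaneously, avoiding any need to patch local choices of orthonormal frames or of rotations.
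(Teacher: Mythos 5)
Your proposal is correct and follows essentially the same route as the paper: the homomorphism property is exactly the specialization of Lemma \ref{composition formula} to $h'=h$, and smoothness comes from the smooth dependence of the class $[\eta_{\psi,b}]$ on $\psi$. The only difference is that the paper simply asserts that $[\eta_{\psi,b}]$ varies smoothly when $\psi$ does, whereas you make this precise by exhibiting the canonical smooth section $b(\psi)=\sqrt{A(\psi)}$ (the positive square root of the $h$-self-adjoint operator with $\psi^*h=h(A\cdot,\cdot)$) and invoking Lemma \ref{lemma integer coefficients} to see that this choice does not change the value in the quotient --- a worthwhile filling-in of a detail the paper leaves implicit, not a different proof.
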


\subsection*{The coincidence in the quotient of $\flux$ and $\boh_{h,h}$}

In this subsection we restrict to the case in which the two metrics $h$ and $h'$ coincide. In this case, the map $\boh_{h,h}$ is a group homomorphism, as already observed. The main result of this part is the fact that the map $\boh_{h,h}$ coincides with the flux homomorphism, composed with the projection from $H^1_{\dR}(S,\R)$ to $H^1_{\dR}(S,\R)/H^1_{\dR}(S,2\pi\Z)$.

\begin{prop} \label{prop coincidence}
Given a hyperbolic metric $h$ on a closed oriented surface $S$, let $\Omega_h$ be the induced area form. Then the following diagram is commutative:
\begin{equation} \label{eq diagram commutative}
\xymatrix{
 \symp_0(S,\Omega_h) \ar[r]^-{\flux} \ar[dr]_-{\boh_{h,h}} & H^1_{\dR}(S,\R) \ar[d]^-\pi \\
 & H^1_{\dR}(S,\R)/H^1_{\dR}(S,2\pi\Z) \\
}
\end{equation}
\end{prop}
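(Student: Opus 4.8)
The plan is to show that $\boh_{h,h}$ and $\pi\circ\flux$ are group homomorphisms on the connected group $\symp_0(S,\Omega_h)$ whose derivatives at the identity agree, and then to integrate. Both maps are indeed homomorphisms: $\boh_{h,h}$ by Corollary \ref{lie group homo}, and $\pi\circ\flux$ because $\flux$ is a homomorphism by Theorem \ref{thm flux hamiltonian} and $\pi$ is linear. Recall that the Lie algebra of $\symp_0(S,\Omega_h)$ consists of the symplectic vector fields $X$ (those with $\iota_X\Omega_h$ closed), which are exactly the generators of smooth isotopies through the identity, and that the differential of $\flux$ at the identity is, directly from its definition, $X\mapsto[\Omega_h(X,\cdot)]=[\iota_X\Omega_h]$. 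Thus it suffices to prove the infinitesimal identity
\[
d(\boh_{h,h})_{\mathrm{id}}(X)=\pi\big([\iota_X\Omega_h]\big)\qquad\text{for every symplectic vector field }X.
\]

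To compute the left-hand side, I would take an isotopy $\psi_t\in\symp_0(S,\Omega_h)$ with $\psi_0=\mathrm{id}$ and $\frac{d}{dt}\big|_{t=0}\psi_t=X$, and choose smooth sections $b_t$ of $\isom(TS,\psi_t^*h,h)$ with $b_0=\mathrm{id}$, so that $\boh_{h,h}(\psi_t)=[\eta_{\psi_t,b_t}]$ and $\eta_{\psi_0,b_0}=0$. Using the closed-form expression of Corollary \ref{cor altra espressione eta}, namely $\eta_{\psi_t,b_t}=h(b_t^{-1}J_h(\star d^\nabla b_t),J_h\cdot)$, I would differentiate at $t=0$. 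Since $d^\nabla b_0=d^\nabla\mathrm{id}=0$, every term in which the derivative does not fall on $\star d^\nabla b_t$ vanishes, and using that $J_h$ is an $h$-isometry one is left with
\[
\dot\eta:=\tfrac{d}{dt}\big|_{t=0}\eta_{\psi_t,b_t}=h(\star d^\nabla\dot b,\cdot),\qquad \dot b:=\tfrac{d}{dt}\big|_{t=0}b_t.
\]
The relation between $\dot b$ and $X$ comes from differentiating $\psi_t^*h=h(b_t\cdot,b_t\cdot)$ at $t=0$: this gives $\mathcal{L}_Xh=h(\dot b\,\cdot,\cdot)+h(\cdot,\dot b\,\cdot)$, so that the $h$-symmetric part of $\dot b$ equals $\tfrac12(\nabla X+(\nabla X)^*)$, where $*$ is the $h$-adjoint. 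Writing the $h$-skew parts as multiples of $J_h$, this is equivalent to the two decompositions $\nabla X=\dot b^{\mathrm{sym}}+\phi J_h$ and $\dot b=\dot b^{\mathrm{sym}}+gJ_h$ for smooth functions $\phi,g$.

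The heart of the argument is then a curvature computation. Since $d^\nabla(\nabla X)(v,w)=\nabla^2_{v,w}X-\nabla^2_{w,v}X=R(v,w)X$, evaluating on a positively oriented $h$-orthonormal frame $\{v_1,v_2\}$ and using that $h$ has constant curvature $-1$ gives $\star d^\nabla(\nabla X)=R(v_1,v_2)X=J_hX$. On the other hand, since $J_h$ is parallel one checks directly that $\star d^\nabla(fJ_h)=-\grad_h f$ for any smooth function $f$. Combining these with the decompositions above yields
\[
\star d^\nabla\dot b=\star d^\nabla\dot b^{\mathrm{sym}}+\star d^\nabla(gJ_h)=\big(J_hX+\grad_h\phi\big)-\grad_h g,
\]
and hence, using $\Omega_h(\cdot,\cdot)=h(J_h\cdot,\cdot)$, one finds $\dot\eta=h(J_hX,\cdot)+d\phi-dg=\iota_X\Omega_h+d(\phi-g)$. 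Passing to cohomology, $[\dot\eta]=[\iota_X\Omega_h]$, which is the required infinitesimal identity; note that this also re-confirms, consistently with Lemma \ref{lemma integer coefficients}, that the class is insensitive to the skew-symmetric freedom in $\dot b$.

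Finally I would globalize. For $\psi\in\symp_0(S,\Omega_h)$ choose an isotopy $\psi_t$ with generators $X_t$. The homomorphism property gives $\boh_{h,h}(\psi_{t+s})=\boh_{h,h}(\psi_{t+s}\psi_t^{-1})+\boh_{h,h}(\psi_t)$, and since $\frac{d}{ds}\big|_{s=0}(\psi_{t+s}\psi_t^{-1})=X_t$, the infinitesimal identity yields $\frac{d}{dt}\boh_{h,h}(\psi_t)=\pi([\iota_{X_t}\Omega_h])$. Integrating in $t$ over $[0,1]$, commuting the continuous linear map $\pi$ with the integral, and using $\boh_{h,h}(\mathrm{id})=0$ together with the integral definition of $\flux$, I obtain $\boh_{h,h}(\psi)=\pi\big(\int_0^1[\iota_{X_t}\Omega_h]\,dt\big)=\pi(\flux(\psi))$, which is exactly the commutativity of the diagram. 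The step I expect to be the main obstacle is the infinitesimal computation: getting the Hodge dual of the exterior covariant derivative of $\nabla X$ right, and tracking the signs in the curvature and symplectic conventions carefully enough that the symmetric part of $\dot b$ produces precisely $J_hX$ while its skew part contributes only exact forms.
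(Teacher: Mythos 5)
Your proposal is correct, and its analytic core is the same as the paper's: your infinitesimal identity is exactly the paper's Lemma \ref{lemma diff boh}, proved by the same means --- differentiating $\psi_t^*h=h(b_t\cdot,b_t\cdot)$, splitting $\nabla X$ and $\dot b$ into a common symmetric part plus function multiples of $J_h$, and using the curvature identities $\star d^\nabla(\nabla X)=J_hX$ and $\star d^\nabla(fJ_h)=-\grad f$. (Your two functions $\phi,g$ encode the same information as the paper's single $f$; incidentally your signs are internally consistent, whereas the paper's Equation \eqref{eq hodge duals} carries a harmless sign slip, immaterial since the discrepancy is an exact form.) The genuine difference is the globalization step. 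The paper passes from equality of differentials to equality of homomorphisms by invoking Lemma \ref{lemma inf dim lie gps}, an abstract uniqueness statement for homomorphisms of possibly infinite-dimensional Lie groups, which is stated without proof and is in fact delicate for Fr\'echet Lie groups such as $\symp_0(S,\Omega_h)$, whose exponential map need not be locally surjective. You avoid that lemma entirely: using the homomorphism property of $\boh_{h,h}$ (Corollary \ref{lie group homo}) you differentiate $t\mapsto\boh_{h,h}(\psi_t)$ along an arbitrary symplectic isotopy, obtain $\tfrac{d}{dt}\,\boh_{h,h}(\psi_t)=\pi\bigl([\Omega_h(X_t,\cdot)]\bigr)$, and integrate against the integral definition of $\flux$. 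What this buys is self-containedness --- your integration argument effectively proves the only instance of the abstract lemma that is needed --- at the small cost of requiring the infinitesimal identity for arbitrary isotopies through the identity rather than flows (which is fine: your computation, like the paper's, never uses that $\psi_t$ is a flow) and of choosing the sections $b_t$ smoothly in $t$, an assumption the paper makes as well.
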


Surprisingly, Proposition \ref{prop coincidence} implies that the homomorphism $\boh_{h,h}$ (when the two hyperbolic metrics coincide) does not depend on the hyperbolic metric $h$, but only on its area form $\Omega_h$.

To prove Proposition \ref{prop coincidence}, we will first prove that the two maps coincide at the infinitesimal level. Recall that the Lie algebra $\mathfrak{symp}(S,\Omega_h)$ consists of smooth vector fields satisfying $\mathrm{div} X=\tr\nabla X=0$. The exponential map $\exp:\mathfrak{symp}(S,\Omega_h)\to\symp_0(S,\Omega_h)$ coincides with the flow of the vector field $X$, namely
$\exp(tX)=\psi_t\in\symp_0(S,\Omega_h)$
where 
$$\frac{d}{dt}\psi_t=X\circ \psi_t~.$$
Moreover, since $H^1_{\dR}(S,2\pi\Z)$ is discrete, the tangent space to the quotient $H^1_{\dR}(S,\R)/H^1_{\dR}(S,2\pi\Z)$ at the identity element is naturally identified with $H^1_{\dR}(S,\R)$ itself. With this convention:

\begin{lemma} \label{lemma diff boh}
Let $X\in\mathfrak{symp}(S,\Omega_h)$, and let $\psi_t\in\symp_0(S,\Omega_h)$ be the flow of $X$. Then 
$$\left.\frac{d}{dt}\right|_{t=0}\boh_{h,h}(\psi_t)=[\Omega_h(X,\cdot)]~.$$
\end{lemma}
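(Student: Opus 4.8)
The plan is to differentiate the closed-form expression for $\eta_{\psi,b}$ given in Corollary \ref{cor altra espressione eta}, evaluate it at $t=0$, and identify the resulting cohomology class with $[\Omega_h(X,\cdot)]$. Since $\psi_0=\mathrm{id}$, I would take $b_0=\mathrm{Id}$ as the section over $t=0$; because $\nabla$ is torsion-free one checks immediately from \eqref{eq exterior derivative} that $d^\nabla\mathrm{Id}=0$, so $\star d^\nabla b_0=0$ and $\eta_{\psi_0,b_0}=0$. Choosing a smooth family $b_t\in\Gamma^\infty(\isom(TS,\psi_t^*h,h))$ with $b_0=\mathrm{Id}$ and writing $\dot b=\ddt b_t$, I would differentiate $\eta_{\psi_t,b_t}=h\big(b_t^{-1}J_h(\star d^\nabla b_t),J_h\cdot\big)$. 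As every term in which the derivative does not fall on $\star d^\nabla b_t$ is multiplied by $\star d^\nabla b_0=0$, only one term survives, giving
$$\ddt\eta_{\psi_t,b_t}=h\big(J_h(\star d^\nabla\dot b),J_h\cdot\big)=h(\star d^\nabla\dot b,\cdot)~,$$
using that $J_h$ is an $h$-isometry.

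Next I would determine $\dot b$. Differentiating the isometry relation $h(b_tv,b_tw)=(\psi_t^*h)(v,w)$ at $t=0$ yields $h(\dot bv,w)+h(v,\dot bw)=(\mathcal{L}_Xh)(v,w)$, so the $h$-symmetric part of the endomorphism $\dot b$ equals $\tfrac12\mathcal{L}_Xh$, while its antisymmetric part is unconstrained (it reflects the freedom of choosing $b_t$). I would show this antisymmetric freedom does not affect the cohomology class: replacing $\dot b$ by $\dot b+aJ_h$ for a function $a$ changes $\star d^\nabla\dot b$ by $\star d^\nabla(aJ_h)$, which a short computation using $\nabla J_h=0$ and torsion-freeness evaluates to $-\grad_h a$, so $h(\star d^\nabla(aJ_h),\cdot)=-da$ is exact. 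Since the $h$-symmetric part of the endomorphism $\nabla X:v\mapsto\nabla_vX$ is likewise $\tfrac12\mathcal{L}_Xh$, I may therefore replace $\dot b$ by $\nabla X$ without altering the class of the derivative.

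The computation then reduces to curvature. Treating $\nabla X$ as a $TS$-valued $1$-form, the definition \eqref{eq exterior derivative} gives $d^\nabla(\nabla X)(v,w)=\nabla_v\nabla_wX-\nabla_w\nabla_vX-\nabla_{[v,w]}X=R(v,w)X$, so for an oriented orthonormal frame $\{v_1,v_2\}$ one has $\star d^\nabla(\nabla X)=R(v_1,v_2)X$. Using that $h$ has constant curvature $-1$, the standard formula $R(v_1,v_2)X=-\big(h(v_2,X)v_1-h(v_1,X)v_2\big)$ simplifies to $J_hX$. Hence $\ddt\eta_{\psi_t,b_t}=h(J_hX,\cdot)=\Omega_h(X,\cdot)$, and passing to cohomology classes — where, as noted before the statement, the tangent space to the quotient at the identity is identified with $H^1_\dR(S,\R)$ — gives the claim.

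The main obstacle I anticipate is the bookkeeping in the middle step: one must argue carefully that $\ddt\boh_{h,h}(\psi_t)$ is well-defined independently of the chosen family $b_t$, i.e. that the antisymmetric part of $\dot b$ contributes only an exact form, and then exploit this freedom to replace the genuine $\dot b$ by the convenient $\nabla X$. Keeping the sign conventions for the curvature operator and for the identity $\Omega_h=h(J_h\cdot,\cdot)$ consistent throughout is the other point requiring care.
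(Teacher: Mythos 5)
Your proof is correct and follows essentially the same route as the paper's: both differentiate the formula of Corollary \ref{cor altra espressione eta} at $t=0$ (using $\star d^\nabla b_0=0$ so only one term survives), identify the symmetric part of $\dot b$ with that of $\nabla X$ from the isometry condition, and use the two curvature computations $\star d^\nabla(\nabla X)=J_h X$ and $\star d^\nabla(f J_h)=-\grad f$ to conclude modulo exact forms. The only difference is organizational: the paper writes $\nabla X=\dot b+fJ_h$ and computes $\star d^\nabla \dot b=J_h X-\grad f$ directly, whereas you package the same computation as the statement that $\dot b$ may be replaced by $\nabla X$ at the cost of an exact form.
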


\begin{proof}
In order to compute $\boh_{h,h}(\psi_t)$, recall that $\boh_{h,h}$ is defined as the class of the 1-form $\eta_{\psi_t,b_t}$, where we can choose a smoothly varying family $b_t\in\isom(TS,\psi_t^*h,h)$.

First of all, observe that, by differentiating the condition
$$h(d\psi_t\cdot,d\psi_t\cdot)=h(b_t\cdot,b_t\cdot)$$
we obtain that
$$h(\nabla_v X,w)+h(v,\nabla_w X)=h(\dot b(v),w)+h(v,\dot b(w))~,$$
where
$$\dot b=\left.\frac{d}{dt}\right|_{t=0}b_t$$
and as usual $\nabla$ is the Levi-Civita connection of $h$. 
In other words, the symmetric part of the operators $\nabla_X$ and $\dot b$ coincide. Since $\psi_t$ preserves the area, the generating field $X$ is divergence-free, that is $\tr\nabla X=0$. On the other hand, since $\det b_t= 1$, we have $\tr\dot b=0$. Therefore there exists a function $f:S\to\R$ such that
$$\nabla X=\dot b+fJ_h~,$$
where $J_h$ is the almost-complex structure determined by the metric $h$. Now, let us take exterior derivatives, applied to an othonormal frame $\{v_1,v_2\}$. Using that the curvature of $h$ is identically $-1$, one obtains
$$d^\nabla (\nabla X)(v_1,v_2)=J_h X~,$$
while on the other hand, using that $J_h$ is parallel (namely $\nabla J_h=0$), we have
$$d^\nabla(fJ_h)(v_1,v_2)=-df(v_1)v_1-df(v_2)v_2=-\grad f~.$$
Hence we have the following formula for the Hodge duals, which will be used later in the proof:
\begin{equation} \label{eq hodge duals}
\star d^\nabla \dot b=J_h X-\grad f~.
\end{equation}
Now from Corollary \ref{cor altra espressione eta}, we have:
$$\left.\frac{d}{dt}\right|_{t=0}\eta_{\psi_t,b_t}=\left.\frac{d}{dt}\right|_{t=0} h(b_t^{-1}J_h(\star d^\nabla b_t),J_h\cdot)$$
Recall that $b_0$ is the identity operator, hence $d^\nabla b_0=0$. Hence
\begin{align*}
\left.\frac{d}{dt}\right|_{t=0}\eta_{\psi_t,b_t}&=h(J_h(\star d^\nabla \dot b),J_h\cdot) \\
&=h(\star d^\nabla \dot b,\cdot) \\
&=h(J_h X-\grad f,\cdot) \\
&=\Omega_h(X,\cdot)+df~.
\end{align*} 
This shows that 
$$\left[\left.\frac{d}{dt}\right|_{t=0}\eta_{\psi_t,b_t}\right]=[\Omega_h(X,\cdot)]~,$$
and thus concludes the proof.
\end{proof}

The proof of Proposition \ref{prop coincidence} will be a consequence of the following lemma about (infinite-dimensional) Lie groups.

\begin{lemma} \label{lemma inf dim lie gps}
Let $G$ and $H$ be Lie groups, possibly of infinite dimension. Let $F_1,F_2:G\to H$ be Lie group homomorphism and let 
$(F_1)_*,(F_2)_*:\mathfrak g\to\mathfrak h$ be the induced Lie algebra homomorphisms. If $(F_1)_*=(F_2)_*$, then $F_1=F_2$.
\end{lemma}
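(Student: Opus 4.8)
The plan is to reduce the identity $F_1=F_2$ to a uniqueness statement for a first-order ODE on $H$, using only the fact that a Lie group homomorphism intertwines left translations. Since $F_1(e_G)=F_2(e_G)=e_H$, it suffices to prove $F_1(g)=F_2(g)$ for an arbitrary $g\in G$. Here connectedness of $G$ is essential (and is satisfied in all the applications, where $G=\symp_0(S,\Omega_h)$ is even simply connected, see \cite{polterovich}): it lets me choose a smooth path $g_\bullet\colon[0,1]\to G$ with $g_0=e_G$, $g_1=g$, and consider its left logarithmic derivative $X_t:=(dL_{g_t^{-1}})\dot g_t\in\mathfrak g$. In finite dimensions one could argue more directly --- from $F\circ\exp_G=\exp_H\circ F_*$ one gets that $F_1$ and $F_2$ agree on the image of $\exp_G$, which generates the connected group $G$ --- but this route relies on exponentials generating $G$, which may fail in infinite dimensions, so I prefer the path argument below.

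The key step is the computation of the left logarithmic derivative of the image path $h_t:=F(g_t)$ for a homomorphism $F$. Differentiating the relation $F\circ L_a=L_{F(a)}\circ F$ (which is just the homomorphism property) at the point $g_t$ with $a=g_t^{-1}$ yields $(dF)_e\circ(dL_{g_t^{-1}})=(dL_{F(g_t)^{-1}})\circ(dF)_{g_t}$, and hence $(dL_{h_t^{-1}})\dot h_t=F_*X_t$. Applying this to both homomorphisms and writing $\Phi:=(F_1)_*=(F_2)_*$, I find that the two paths $h^{(1)}_t=F_1(g_t)$ and $h^{(2)}_t=F_2(g_t)$ solve the \emph{same} equation $(dL_{h_t^{-1}})\dot h_t=\Phi(X_t)$ with the \emph{same} initial condition $h_0=e_H$.

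It then remains to invoke uniqueness of solutions of this logarithmic-derivative equation with prescribed initial value; uniqueness forces $h^{(1)}_t=h^{(2)}_t$ for all $t$, and evaluation at $t=1$ gives $F_1(g)=F_2(g)$. As $g$ was arbitrary, this yields $F_1=F_2$. In the finite-dimensional case uniqueness is immediate, since the equation is the ODE on $H$ for the time-dependent vector field $(t,h)\mapsto(dL_h)\Phi(X_t)$, to which the Picard--Lindel\"of theorem applies.

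The hard part is precisely this uniqueness in infinite dimensions, where Banach-space ODE theory is unavailable for Fr\'echet--Lie groups such as $\symp_0(S,\Omega_h)$. The appropriate notion is that of a regular Lie group in the sense of Milnor, for which the reconstruction of a path from its left logarithmic derivative (with fixed initial value) exists and is unique; the diffeomorphism and symplectomorphism groups of a closed manifold are known to be regular, so the uniqueness needed here is available and the argument goes through for every group appearing in the paper.
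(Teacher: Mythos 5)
Your proof is correct, but be aware that the paper does not actually prove this lemma at all: it is stated as a black box about (possibly infinite-dimensional) Lie groups and immediately applied to deduce Proposition \ref{prop coincidence}. So your argument supplies a proof where the paper offers none, and it is worth comparing your hypotheses with the paper's statement. First, you are right that connectedness of $G$ is essential: as literally stated in the paper the lemma is false (two distinct homomorphisms of a finite group, viewed as a $0$-dimensional Lie group, induce the same trivial map on Lie algebras), and connectedness does hold in the only case used, $G=\symp_0(S,\Omega_h)$. Your key computation is also correct: differentiating $F\circ L_a=L_{F(a)}\circ F$ gives that $h_t=F(g_t)$ satisfies $(dL_{h_t^{-1}})\dot h_t=F_*X_t$, so $F_1(g_t)$ and $F_2(g_t)$ solve the same logarithmic-derivative equation with the same initial value. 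Second, your last step is more heavily loaded than it needs to be: you invoke \emph{regularity} of $H$ in Milnor's sense, but regularity is what guarantees \emph{existence} of a path with prescribed logarithmic derivative, and existence is not needed here, since the two solutions are already given to you as $F_1(g_t)$ and $F_2(g_t)$. Uniqueness alone suffices, and it is automatic in any Lie group modeled on a locally convex space: if $h_t$ and $k_t$ have the same left logarithmic derivative and $h_0=k_0$, then the curve $h_t k_t^{-1}$ has vanishing derivative (using that left and right translations commute), hence is constant. This observation frees the lemma from any regularity hypothesis and makes it valid for arbitrary locally convex Lie groups with $G$ connected; it also renders the discussion of regularity of symplectomorphism groups unnecessary, since in the paper's application the target $H=H^1_{\dR}(S,\R)/H^1_{\dR}(S,2\pi\Z)$ is a finite-dimensional torus anyway, where ordinary ODE uniqueness applies.
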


Let us now conclude the proof of Proposition \ref{prop coincidence}.
\begin{proof}[Proof of Proposition \ref{prop coincidence}]
Observe that both $\pi\circ\flux$ and $\boh_{h,h}$ (the latter by Corollary \ref{lie group homo}) are Lie groups homomorphisms from $\symp_0(S,\Omega)$ to $H^1_\dR(S,\R)/H^1_\dR(S,2\pi\Z)$. In Lemma \ref{lemma diff boh}, we showed that the differential of $\boh_{h,h}$ is
$$(\boh_{h,h})_*(X)=[\Omega_h(X,\cdot)]\in H^1_{\dR}(S,\R)~,$$
for any $X\in\mathfrak{symp}(S,\Omega_h)$. On the other hand,
observe that $\psi_t=\exp(tX)$, where $X$ is the generating vector field of the 1-parameter subgroup $\psi_t$, and $\exp$ is the Lie group exponential map of $\symp_0(S,\Omega_h)$. Hence, by definition
$$\flux(\psi_t)=\int_0^1[\Omega_h(tX,\cdot)]ds=t[\Omega_h(X,\cdot)]~.$$
Therefore
$$(\flux)_*(X)=\left.\frac{d}{dt}\right|_{t=0}\flux(\psi_t)=[\Omega_h(X,\cdot)]~.$$
By appying Lemma \ref{lemma inf dim lie gps}, the proof follows.
\end{proof}

The following is a direct consequence of Proposition \ref{prop coincidence} and the fact that $H^1_{\dR}(S,2\pi\Z)$ is discrete in $H^1_{\dR}(S,\R)$:

\begin{cor} \label{cor codazzi hamiltonian}
Given a hyperbolic metric $h$ on the closed oriented surface $S$, the connected component of the identity in 
$\mathrm{Ker}(\boh_{h,h})$ coincides with $\ham(S,\Omega_h)$,
where $\Omega_h$ is the area form induced by $h$.
\end{cor}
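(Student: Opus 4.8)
The plan is to leverage Proposition \ref{prop coincidence}, which identifies $\boh_{h,h}$ with $\pi\circ\flux$, together with the short exact sequence of Theorem \ref{thm flux hamiltonian}, in order to describe $\mathrm{Ker}(\boh_{h,h})$ explicitly and then isolate its identity component.

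First I would observe that, since $\boh_{h,h}=\pi\circ\flux$ where $\pi:H^1_\dR(S,\R)\to H^1_\dR(S,\R)/H^1_\dR(S,2\pi\Z)$ is the quotient projection, an element $\psi\in\symp_0(S,\Omega_h)$ lies in $\mathrm{Ker}(\boh_{h,h})$ if and only if $\flux(\psi)\in\mathrm{Ker}(\pi)=H^1_\dR(S,2\pi\Z)$. In other words,
$$\mathrm{Ker}(\boh_{h,h})=\flux^{-1}\big(H^1_\dR(S,2\pi\Z)\big)~.$$
By Theorem \ref{thm flux hamiltonian}, the subgroup $\ham(S,\Omega_h)=\flux^{-1}(0)$ is contained in this preimage, and it is connected: indeed it is path-connected, since any Hamiltonian symplectomorphism is joined to the identity by a Hamiltonian isotopy, which remains inside $\ham(S,\Omega_h)$. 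As it contains the identity, $\ham(S,\Omega_h)$ is contained in the identity component $C$ of $\mathrm{Ker}(\boh_{h,h})$.

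For the reverse inclusion I would use that $\flux$ is continuous and that $H^1_\dR(S,2\pi\Z)$ is discrete in $H^1_\dR(S,\R)$: since $C$ is connected and contained in $\flux^{-1}(H^1_\dR(S,2\pi\Z))$, its image $\flux(C)$ is a connected subset of the discrete group $H^1_\dR(S,2\pi\Z)$, hence reduces to a single point. Because $\mathrm{id}\in C$ and $\flux(\mathrm{id})=0$, this point must be $0$, so that $C\subseteq\flux^{-1}(0)=\ham(S,\Omega_h)$. Combining the two inclusions yields $C=\ham(S,\Omega_h)$, as claimed.

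The argument is essentially formal once Proposition \ref{prop coincidence} is available; the only point requiring a little care is the connectedness of $\ham(S,\Omega_h)$, which I would justify by restricting a Hamiltonian isotopy $\psi_t$ to subintervals $[0,s]$ so as to exhibit each $\psi_s$ as Hamiltonian, thereby producing a path inside $\ham(S,\Omega_h)$ from $\mathrm{id}$ to $\psi_1$. I do not expect a genuine obstacle here, since the discreteness of $H^1_\dR(S,2\pi\Z)$ forces any connected subset landing in it to be constant, and the rest is a direct manipulation of the two inclusions.
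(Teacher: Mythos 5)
Your proposal is correct and follows exactly the route the paper intends: the corollary is stated there as a direct consequence of Proposition \ref{prop coincidence}, the discreteness of $H^1_\dR(S,2\pi\Z)$ in $H^1_\dR(S,\R)$, and the identification $\ham(S,\Omega_h)=\mathrm{Ker}(\flux)$ from Theorem \ref{thm flux hamiltonian}. Your write-up simply makes explicit the two inclusions (path-connectedness of $\ham(S,\Omega_h)$ via Hamiltonian isotopies, and the constancy of $\flux$ on the connected identity component by discreteness of the target subgroup), which is precisely the argument the paper leaves to the reader.
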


\section{Application to Anti-de Sitter geometry} \label{sec preliminaries}

In this section, we will start by providing the necessary preliminary notions on Anti-de Sitter geometry, from the basic definitions concerning Anti-de Sitter space. Then we will show how a symplectomorphism of hyperbolic surfaces is associated to equivariant surfaces in Anti-de Sitter space, with the special case of minimal Lagrangian diffeomorphisms. We conclude with the proof of our main result and some remarks.

\subsection*{Anti-de Sitter space}
Let $\kappa$ be the Killing form  on the Lie group $\PSLR$, which is a bi-invariant bilinear form on the Lie algebra $\psl_2\R$ of signature $(2,1)$. The bilinear form $\kappa$ induces a Lorentzian metric on $\PSLR$, which we will denote by $g_\kappa$. Then Anti-de Sitter space of dimension 3 is defined as:
$$\AdS^3:=\left(\PSLR, \frac{1}{8}g_\kappa\right)~.$$
Therefore $\AdS^3$ is a Lorentzian manifold, homeomorphic to a solid torus, of constant sectional curvature. It turns out that, due to the normalization factor $1/8$, its sectional curvature is $-1$. 
From the construction, it follows that the identity component of the isometry group of $\AdS^3$ is:
$$\isom_0(\AdS^3)= \PSLR\times\PSLR~,$$
where $(\alpha,\beta)\in\PSLR\times\PSLR$ acts on $\AdS^3$ by
$$(\alpha,\beta)\cdot \gamma=\alpha\circ\gamma\circ \beta^{-1}~.$$
The group $\PSLR$ is isomorphic to the group of orientation-preserving isometries of $\Hyp^2$, the hyperbolic plane which we consider in the upper-half plane model:
$$\Hyp^2:=\left(\{z\in\C\,\mid\,\Im(z)>0\},\frac{|dz|}{\Im(z)}\right)~.$$
Then, using the structure of the isometry group, one can show the following fact, which will be of fundamental importance in this paper. We recall that a differentiable curve $\gamma:I\to\AdS^3$ is \emph{timelike} if $g_\kappa(\dot\gamma,\dot\gamma)<0$ at every point $\gamma(t)$, \emph{spacelike} if $g_\kappa(\dot\gamma,\dot\gamma)>0$, and \emph{lightlike} if $g_\kappa(\dot\gamma,\dot\gamma)=0$.
\begin{fact} \label{fact timelike geodesics}
There is a 1-1 correspondence
$$\{\text{timelike geodesics in }\AdS^3\}\leftrightarrow\Hyp^2\times\Hyp^2~,$$
which is defined by associating to $(x,y)\in\Hyp^2\times\Hyp^2$ the timelike geodesic 
\begin{equation} \label{eq closed geodesic}
L_{x,y}:=\{\gamma\in\PSLR\,\mid\,\gamma(y)=x\}~.
\end{equation}
This correspondence is equivariant with respect to the action of $\PSLR\times\PSLR$ on $\Hyp^2\times\Hyp^2$ by isometries of $\Hyp^2$ on each factor, and on the set of timelike geodesics induced by isometries of $\AdS^3$.
\end{fact}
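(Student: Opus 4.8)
The plan is to use the bi-invariance of the metric $g_\kappa$ to reduce the description of timelike geodesics to a description of one-parameter subgroups of $\PSLR$, and then to match these subgroups (and their translates) with the cosets $L_{x,y}$.

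First I would recall that, since $g_\kappa$ is induced by the $\mathrm{Ad}$-invariant Killing form $\kappa$, it is a bi-invariant metric on the Lie group $\PSLR$; for such a metric the Levi-Civita connection of two left-invariant fields is $\nabla_X Y=\frac{1}{2}[X,Y]$, so the geodesics through the identity are exactly the one-parameter subgroups $t\mapsto\exp(t\xi)$, $\xi\in\psl_2\R$, and a general geodesic is a left translate $g\cdot\exp(t\xi)$. This algebraic computation is insensitive to the signature of $\kappa$, so it applies verbatim in the Lorentzian setting. Moreover, left translations being isometries, the causal type of $g\cdot\exp(t\xi)$ is the sign of $\kappa(\xi,\xi)$; since on $\psl_2\R$ one checks that $\kappa(\xi,\xi)<0$ precisely for the elliptic elements, the timelike geodesics are exactly the left translates of \emph{elliptic} one-parameter subgroups.

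The key observation is that the elliptic one-parameter subgroups are precisely the point stabilizers: an elliptic $\xi$ fixes a unique $y\in\Hyp^2$, and the closed subgroup $\{\exp(t\xi)\,\mid\,t\in\R\}$ is the full rotation group $\mathrm{Stab}(y)=\{\gamma\in\PSLR\,\mid\,\gamma(y)=y\}$, which is exactly $L_{y,y}$. I would then rewrite a general left translate: choosing any $g$ with $g(y)=x$, one has
$$g\cdot\mathrm{Stab}(y)=\{g\circ h\,\mid\,h(y)=y\}=\{\gamma\,\mid\,\gamma(y)=g(y)=x\}=L_{x,y}~.$$
This shows at once that every $L_{x,y}$ is a timelike geodesic, and, reading the chain backwards, that every timelike geodesic equals $L_{g(y),y}$ for suitable $g,y$; hence $(x,y)\mapsto L_{x,y}$ is onto. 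For injectivity I would use a coset argument: if $L_{x,y}=L_{x',y'}$, writing them as $g\cdot\mathrm{Stab}(y)=g'\cdot\mathrm{Stab}(y')$ and comparing forces $\mathrm{Stab}(y)=\mathrm{Stab}(y')$, whence $y=y'$ because a stabilizer determines its fixed point, and then $g^{-1}g'\in\mathrm{Stab}(y)$ gives $x=g(y)=g'(y)=x'$. Finally, equivariance is a direct substitution: for $(\alpha,\beta)\in\PSLR\times\PSLR$ acting by $\gamma\mapsto\alpha\gamma\beta^{-1}$, setting $\delta=\alpha\gamma\beta^{-1}$ turns the defining condition $\gamma(y)=x$ into $\delta(\beta(y))=\alpha(x)$, so $(\alpha,\beta)\cdot L_{x,y}=L_{\alpha(x),\beta(y)}$.

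The main obstacle is conceptual rather than computational: one must carefully justify the two standard but essential facts about $\PSLR$, namely that for the bi-invariant Lorentzian metric the geodesics are exactly the translates of one-parameter subgroups (including the verification that the connection computation $\nabla_X Y=\frac{1}{2}[X,Y]$ is unaffected by the indefiniteness of $\kappa$), and that the elliptic one-parameter subgroups coincide exactly with the point stabilizers $\mathrm{Stab}(y)$, $y\in\Hyp^2$, parametrized bijectively by their fixed point. Once these are in place, surjectivity, injectivity, and equivariance all follow from elementary coset manipulations.
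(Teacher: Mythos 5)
Your proof is correct, and it rests on the same basic mechanism as the paper's argument --- bi-invariant metric theory on $\PSLR$ together with the identification of the point stabilizer $\mathrm{Stab}(y)=L_{y,y}$ as a compact one-parameter subgroup --- but the two treatments of the actual bijection differ. The paper only verifies that $L_{x_0,x_0}$ is a closed timelike geodesic (negative definiteness of the Killing form on a maximal compact subgroup, and the coincidence of the Lie and Riemannian exponentials for bi-invariant metrics), and then settles surjectivity by asserting that every timelike geodesic is the image of $L_{x_0,x_0}$ under some isometry; this is a homogeneity claim (transitivity of $\isom_0(\AdS^3)$ on timelike geodesics) which the paper does not justify, and injectivity of $(x,y)\mapsto L_{x,y}$ is not addressed at all. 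You instead obtain surjectivity from the converse half of the bi-invariant classification --- every geodesic through the identity is a one-parameter subgroup, so every geodesic is $g\exp(t\xi)$ --- combined with the sign computation $\kappa(\xi,\xi)<0$ exactly for elliptic $\xi$ and the coset identity $g\cdot\mathrm{Stab}(y)=L_{g(y),y}$; and you supply the missing injectivity via the coset comparison forcing $\mathrm{Stab}(y)=\mathrm{Stab}(y')$. What your route buys is self-containedness: it avoids the unproved transitivity claim (which would itself require showing that the stabilizer of a point acts transitively on timelike directions there) and closes the injectivity gap. What the paper's route buys is brevity: once homogeneity is granted, the equivariance formula $(\alpha,\beta)\cdot L_{x,y}=L_{\alpha(x),\beta(y)}$ does double duty, giving surjectivity for free. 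One small point to make explicit if you write this up fully: the identification of the elliptic one-parameter subgroup with the \emph{full} stabilizer uses that a nontrivial continuous homomorphism $\R\to\mathrm{Stab}(y)\cong S^1$ is surjective, which is what makes $L_{y,y}$ a closed (circle) geodesic, consistent with the paper's remark that its length is $\pi$.
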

Indeed, $L_{x_0,x_0}$ is a closed timelike geodesic for every $x\in\Hyp^2$, since it is a maximal compact subgroup of the Lie group $\PSLR$, hence the restriction of the bilinear form is negative definite. It is a geodesic since it is a 1-parameter group, and the Riemannian exponential map coincides with the Lie group exponential map for bi-invariant metrics as $g_\kappa$ is. 
It also turns out that the length of $L_{x_0,x_0}$ is $\pi$, and the arclength parameter is $1/2$ the angle of rotation of elliptic elements of $\PSLR$ fixing $x_0$.

The equivariance of such 1-1 correspondence can be easily checked, since:
$$(\alpha,\beta)\cdot L_{x,y}=L_{\alpha(x),\beta(y)}~.$$
This also shows that every timelike geodesic of $\AdS^3$ is of the form $L_{x,y}$ for some $x,y$, since it is the image of $L_{x_0,x_0}$ under some isometry $(\alpha,\beta)$ of $\AdS^3$.

There is a natural notion of boundary at infinity of Anti-de Sitter space, topologically a torus, which is defined in the following way:
$$\partial_\infty\AdS^3:=\RP^1\times \RP^1~,$$
where we declare that a sequence $\gamma_n\in\PSLR$ converges to a pair $(p,q)\in\RP^1\times \RP^1$ if and only if there exists a point $x\in\Hyp^2$ such that:
\begin{equation} \label{eq convergence boundary}
\begin{cases}
\gamma_n(x)\to p \\
\gamma_n^{-1}(x)\to q
\end{cases}~.
\end{equation}
It is not difficult to check that, if Equation \eqref{eq convergence boundary} holds for some point $x\in\Hyp^2$, then it holds for every other point $x'\in\Hyp^2$. It also turns out that every isometry of $\AdS^3$, of the form $(\alpha,\beta)\in\PSLR\times\PSLR$, extends to the boundary, with the obvious action of $(\alpha,\beta)$ on $\RP^1\times \RP^1$.

\subsection*{From invariant spacelike surfaces to symplectomorphisms of hyperbolic surfaces} \label{subsec embeddings sigma}
Recall that in this paper $S$ denotes a closed oriented hyperbolic surface of Euler characteristic $\chi(S)<0$. A smoothly embedded surface $\Sigma$ in $\AdS^3$ is \emph{spacelike} if its induced metric is a Riemannian metric. One of the main objects of this paper will be spacelike surfaces $\Sigma$ which are preserved by a representation
$$\rho:\pi_1(S)\to\PSLR\times\PSLR~,$$
which acts freely and properly discontinously on $\Sigma$. 
Following Mess, there is a precise description of these objects:
\begin{fact}[\cite{mess}] \label{fact mess 1}
Suppose $\Sigma$ is a smooth spacelike surface in $\AdS^3$ and 
$$\rho=(\rho_l,\rho_r):\pi_1(S)\to\PSLR\times\PSLR~,$$
is a representation which acts freely and properly discontinously on $\Sigma$. Then:
\begin{itemize}
\item The representations $\rho_l:\pi_1(S)\to\PSLR$ and $\rho_r:\pi_1(S)\to\PSLR$ are Fuchsian, that is, each of them acts freely and properly discontinously on $\Hyp^2$.
\item The frontier $\partial_\infty\Sigma$ of $\Sigma$ in $\partial_\infty\AdS^3\cong\RP^1\times\RP^1$ is the graph of the unique $\rho_l$-$\rho_r$-equivariant  homeomorphism of $\RP^1$.
\end{itemize}
\end{fact}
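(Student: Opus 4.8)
The plan is to prove the two assertions in the order that is logically convenient, namely first the description of $\partial_\infty\Sigma$ as a graph (which is fairly hands-on) and then the Fuchsianness of $\rho_l,\rho_r$ (which carries the real weight). Throughout I would keep in mind the \emph{Gauss map} of $\Sigma$: at each $p\in\Sigma$ the future unit normal determines a timelike geodesic through $p$, hence by Fact \ref{fact timelike geodesics} a pair $(G_l(p),G_r(p))\in\Hyp^2\times\Hyp^2$. Since an orientation- and time-orientation-preserving isometry $(\alpha,\beta)\in\PSLR\times\PSLR$ carries $L_{x,y}$ to $L_{\alpha(x),\beta(y)}$, the induced map $G=(G_l,G_r):\Sigma\to\Hyp^2\times\Hyp^2$ is equivariant, with $G_l$ being $\rho_l$-equivariant and $G_r$ being $\rho_r$-equivariant. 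This is the object that transports the cocompact $\pi_1(S)$-action on $\Sigma$ onto the two hyperbolic factors.

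For the second item, I would first show that $\Gamma:=\partial_\infty\Sigma$ is \emph{acausal}. Because $\Sigma$ is spacelike its tangent planes contain no causal vectors, so no two of its points are joined by a causal curve; passing to the limit at infinity, $\Gamma$ meets each leaf of the two lightlike foliations of $\partial_\infty\AdS^3\cong\RP^1\times\RP^1$ (the fibres of the two factor projections) in at most one point. As $\Gamma$ is an embedded closed curve nontrivial in each factor, it then meets each leaf exactly once and projects homeomorphically onto each $\RP^1$; hence $\Gamma$ is the graph of a homeomorphism $u:\RP^1\to\RP^1$. Invariance of $\Gamma$ under the product action of $\rho=(\rho_l,\rho_r)$ on $\RP^1\times\RP^1$ (using that isometries of $\AdS^3$ extend to the boundary) says that $(\rho_l(\gamma)p,\rho_r(\gamma)u(p))$ lies on the graph for all $p$ and $\gamma\in\pi_1(S)$, which is exactly the intertwining relation $u\circ\rho_l(\gamma)=\rho_r(\gamma)\circ u$; thus $u$ is $\rho_l$-$\rho_r$-equivariant. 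Uniqueness I would obtain by noting that the graph of any such equivariant homeomorphism is forced to equal the limit set of the $\rho$-action on $\partial_\infty\AdS^3$, which depends on $\rho$ alone.

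The Fuchsianness of $\rho_l$ and $\rho_r$ is where the genuine difficulty lies, and I expect it to be the main obstacle. The reason is that — as the example $\rho_l=\rho_r$ with $u=\mathrm{id}$ shows — the mere existence of an equivariant boundary homeomorphism does \emph{not} force a representation to be discrete and cocompact; the interior, achronal geometry of $\Sigma$ must be used and cannot be read off from boundary data. The route I would take is to show that the acausal curve $\Gamma$ bounds a globally hyperbolic \emph{invisible domain} (domain of dependence) $D(\Gamma)\subset\AdS^3$ for which $\Sigma$ is a Cauchy surface, and on which $\rho$ acts freely and properly discontinuously; cocompactness of the $\pi_1(S)$-action on $\Sigma$ then makes $D(\Gamma)/\rho$ a globally hyperbolic maximal compact $\AdS$ spacetime, whose holonomy is a pair of Fuchsian representations by Mess's structure theory \cite{mess}. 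Concretely, the mechanism is that the left Gauss map $G_l:\Sigma\to\Hyp^2$ becomes proper and surjective once $\Sigma$ is known to be a Cauchy surface; taking a compact fundamental domain $K\subset\Sigma$ one then writes $\Hyp^2=\bigcup_{\gamma}\rho_l(\gamma)G_l(K)$, which yields cocompactness of $\rho_l(\pi_1 S)$, while properness of $G_l$ yields proper discontinuity and discreteness, so the quotient is a closed hyperbolic surface and $\rho_l$ is Fuchsian, symmetrically for $\rho_r$. The technical heart — and the step I would expect to be most delicate — is precisely establishing that $\Sigma$ meets every timelike geodesic of the relevant foliation exactly once, equivalently the properness and surjectivity of $G_l$, since this is where achronality and the global structure of the domain of dependence, rather than purely boundary-level information, are indispensable.
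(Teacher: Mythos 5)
First, a point of comparison: the paper does not prove this statement at all --- it is quoted as a Fact with attribution to \cite{mess} --- so your proposal can only be measured against Mess's original argument, and against that argument it has two genuine gaps.

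The first gap is your derivation of acausality of $\Gamma=\partial_\infty\Sigma$ from the assertion that a spacelike surface has no two points joined by a causal curve. This is false in $\AdS^3$: the space is a solid torus with closed timelike geodesics through every point, and local spacelikeness does not imply achronality. Concretely, the totally geodesic spacelike plane $P$ dual to the identity --- the set of order-two elliptic elements of $\PSLR$ --- is invariant under any diagonal Fuchsian action $(\rho_0,\rho_0)$, which acts on it freely, properly discontinuously and cocompactly; yet all timelike geodesics normal to $P$ reconverge at the dual point after time $\pi/2$, so \emph{any} two points of $P$ are joined by a future-directed timelike curve. Thus the hypotheses of the Fact are satisfied by surfaces that are as non-achronal as possible, and the first step of your argument collapses (note that the conclusion still holds there: $\partial_\infty P$ is the diagonal, the graph of the identity, so the graph property is true but cannot be a soft limit of a local property). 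What Mess, Barbot and \cite{bonsepequivariant} (Lemma 2.1 of which is quoted in the last section of this paper precisely because of this subtlety) actually do is use cocompactness to get completeness of the induced metric, lift to the universal cover $\widetilde{\AdS^3}$ where achronality \emph{can} be established, and extract the graph property of the frontier from there.

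The second gap is circularity in the part you yourself identify as carrying the real weight. You reduce Fuchsianness to ``Mess's structure theory'' for maximal globally hyperbolic AdS spacetimes, but the statement that the holonomies of such spacetimes form Fuchsian pairs \emph{is} the statement being proven; Mess cannot be invoked as a black box inside his own theorem. Moreover, the concrete mechanism you offer --- properness and surjectivity of the left Gauss map $G_l$ --- is unavailable at this level of generality: the Fact makes no curvature assumption, and by Fact \ref{fact projections} the projection $\pi_l\circ\iota_\Sigma$ is a local diffeomorphism only where the curvature of $\Sigma$ is nonzero; on flat regions $G_l$ is not even an immersion, so the properness argument cannot start. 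Mess's actual route avoids the Gauss map entirely: along $\Sigma$ the circle bundle of future lightlike directions is identified equivariantly and fiberwise homeomorphically both with the unit tangent bundle of $\Sigma$ (orthogonal projection onto $T\Sigma$, nonzero by spacelikeness) and with the flat $\RP^1$-bundle of holonomy $\rho_l$ (a lightlike direction goes to the left coordinate of the endpoint of its geodesic); hence the flat bundle has Euler number $\chi(S)$, and Goldman's theorem (maximal Euler number if and only if Fuchsian) gives that $\rho_l$, and symmetrically $\rho_r$, is Fuchsian. Note this also reverses your logical order: Fuchsianness comes first, the unique $\rho_l$-$\rho_r$-equivariant homeomorphism $u$ then exists by standard Fuchsian theory, and only afterwards is $\partial_\infty\Sigma$ identified with the graph of $u$.
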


In general, given two representations $\rho_1$ and $\rho_2$, we say that a map $f$ is $\rho_1$-$\rho_2$-equivariant if, $f\circ \rho_1(\tau)=\rho_2(\tau)\circ f$ for every $\tau$. If $\rho_l$ and $\rho_r$ are Fuchsian, it is well-known that there exists a unique homeomorphism $f:\RP^1\to\RP^1$ which is $\rho_l$-$\rho_r$-equivariant, which is obtained as the value induced on $\RP^1=\partial_\infty\Hyp^2$ by the lift to $\Hyp^2$ of any diffeomorphism between the quotient closed surfaces $\Hyp^2/\rho_l(\pi_1(S))$ and $\Hyp^2/\rho_r(\pi_1(S))$ isotopic to the identity.

Hence, let $\rho_l$ and $\rho_r$ be Fuchsian representations, so that the oriented topological surface $S$ is homeomorphic to the hyperbolic surfaces $\Hyp^2/\rho_l(\pi_1(S))$ and $\Hyp^2/\rho_r(\pi_1(S))$. We will denote by $h$ and $h'$ the hyperbolic metrics induced on $S$ in this way. We will now define the symplectomorphism
$$\phi_\Sigma:(S,\Omega_{h_l})\to(S,\Omega_{h_r})~,$$
where $\Omega_{h_l}$ and $\Omega_{h_r}$ are the area forms induced by the metrics $h_l$ and $h_r$ and by the orientation of $S$.
For this purpose, consider the map 
\begin{equation} \label{eq defi iota}
\iota_\Sigma:\Sigma\to\Hyp^2\times\Hyp^2~,
\end{equation}
which maps $\gamma\in\Sigma$ to the pair $(x,y)$ which represents the  unique timelike geodesic orthogonal to $\Sigma$ at $\gamma$, using Fact \ref{fact timelike geodesics}. Let $\pi_l,\pi_r:\Hyp^2\times\Hyp^2\to\Hyp^2$ denote the projections on the first and second factor. Then we have:

\begin{fact}[\cite{Schlenker-Krasnov},\cite{bon_schl},\cite{ksurfaces}] \label{fact projections}
Given two Fuchsian representations $\rho_l$ and $\rho_r$ and any smooth spacelike surface $\Sigma$ in $\AdS^3$ invariant by the representation 
$$\rho=(\rho_l,\rho_r):\pi_1(S)\to \PSLR\times\PSLR~,$$
\begin{itemize}
\item For $\gamma\in\Sigma$, if the curvature of the induced metric on $\Sigma$ is different from zero at $\gamma$, then $\pi_l\circ\iota_\Sigma$ and $\pi_r\circ\iota_\Sigma$ are local diffeomorphisms in a neighborhood of $\gamma$.
\item If the curvature of the induced metric on $\Sigma$ is everywhere different from zero, then $\pi_l\circ\iota_\Sigma$ and $\pi_r\circ\iota_\Sigma$ are global diffeomorphisms which extend to homeomorphisms from $\partial_\infty\Sigma$ to $\RP^1$.
\item In this case, the composition 
$$\widetilde \phi_\Sigma=(\pi_r\circ\iota_\Sigma)\circ(\pi_l\circ\iota_\Sigma)^{-1}$$
 induces a symplectomorphism
$$\phi_\Sigma:(S,\Omega_{h_l})\to(S,\Omega_{h_r})$$
isotopic to the identity, where $\Omega_{h_l}$ and $\Omega_{h_r}$ are the area forms of $h_l$ and $h_r$.
\end{itemize}
\end{fact}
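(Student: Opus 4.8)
The plan is to reduce the statement to a single local computation --- the differential of the Gauss map $\iota_\Sigma$ --- and then to globalize it using the cocompactness of the action of $\rho$. Throughout, let $I$ be the Riemannian metric induced on $\Sigma$, let $J$ be the complex structure of $I$, and let $B$ be the shape operator of $\Sigma$ (the $I$-self-adjoint Weingarten operator associated with the future unit timelike normal). Write $G_l=\pi_l\circ\iota_\Sigma$ and $G_r=\pi_r\circ\iota_\Sigma$.

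First I would compute the differentials of $G_l$ and $G_r$. Using Fact \ref{fact timelike geodesics}, the timelike geodesic orthogonal to $\Sigma$ at $\gamma$ is the $\gamma$-translate of an elliptic one-parameter subgroup fixing some $y\in\Hyp^2$, so that $\iota_\Sigma(\gamma)=(\gamma(y),y)$; differentiating this expression and encoding the variation of the normal direction through $B$, one obtains that, up to one and the same fixed positive constant, $dG_l$ and $dG_r$ are conjugate (via $I$-isometries onto the relevant tangent planes of $\Hyp^2$) to the operators $\mathrm{Id}+JB$ and $\mathrm{Id}-JB$. I expect this to be the \emph{main obstacle} of the proof: it is the genuine Anti-de Sitter computation, whereas everything afterwards is essentially formal.

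Granting this, a computation in an $I$-orthonormal frame gives $\det(\mathrm{Id}\pm JB)=1+\det B$, while the Gauss equation for a spacelike surface in $\AdS^3$ reads $K_I=-1-\det B$, whence $\det(\mathrm{Id}\pm JB)=-K_I$. Thus at any point where $K_I\neq 0$ both $\mathrm{Id}\pm JB$ are invertible, so $dG_l$ and $dG_r$ are isomorphisms and $G_l,G_r$ are local diffeomorphisms near $\gamma$ by the inverse function theorem; this is the first item, and when $K_I<0$ the determinant $-K_I$ is positive, so the maps are orientation-preserving. For the second item, assume $K_I\neq 0$ everywhere, so that $G_l,G_r$ are everywhere local diffeomorphisms (by connectedness $K_I<0$ throughout, the relevant case). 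Since $\iota_\Sigma$ is $\rho$-equivariant, $G_l$ is $\rho_l$-equivariant and $G_r$ is $\rho_r$-equivariant, hence they descend to local diffeomorphisms of the closed surface $\ol\Sigma=\Sigma/\rho(\pi_1(S))\cong S$ onto $(S,h_l)$, resp. $(S,h_r)$. A local diffeomorphism between closed surfaces is a covering map, and since $\ol\Sigma$ and the targets share the same nonzero Euler characteristic, this covering has a single sheet, i.e. it is a diffeomorphism. Lifting back, $G_l,G_r:\Sigma\to\Hyp^2$ are global diffeomorphisms, which extend to homeomorphisms $\partial_\infty\Sigma\to\RP^1$ by Fact \ref{fact mess 1}.

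Finally, for the third item I would set $\widetilde\phi_\Sigma=G_r\circ G_l^{-1}$, a $\rho_l$-$\rho_r$-equivariant diffeomorphism of $\Hyp^2$, which therefore descends to a diffeomorphism $\phi_\Sigma$ of $S$; it is isotopic to the identity because its boundary value is the unique $\rho_l$-$\rho_r$-equivariant homeomorphism of $\RP^1$ (Fact \ref{fact mess 1}), which realizes the marking. To see that $\phi_\Sigma$ is a symplectomorphism, I would compare the pulled-back area forms: the Jacobian of $G_l$ relative to $\Omega_I$ and to $\Omega_{\Hyp^2}$ equals the fixed constant times $\det(\mathrm{Id}+JB)=-K_I$, and identically for $G_r$ with $\det(\mathrm{Id}-JB)=-K_I$. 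Hence $G_l^*\Omega_{\Hyp^2}=G_r^*\Omega_{\Hyp^2}$, that is $\widetilde\phi_\Sigma^*\Omega_{\Hyp^2}=\Omega_{\Hyp^2}$; passing to the quotient gives $\phi_\Sigma^*\Omega_{h_r}=\Omega_{h_l}$, so $\phi_\Sigma$ is the desired symplectomorphism isotopic to the identity.
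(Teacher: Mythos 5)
Your strategy is the right one, and it is essentially the argument behind the references the paper cites for this Fact (the paper itself does not reprove it): compute the differential of the Gauss map, identify $dG_l$ and $dG_r$ with $\mathrm{id}+J_\Sigma B_\Sigma$ and $\mathrm{id}-J_\Sigma B_\Sigma$ up to isometric identifications, and then let everything else follow formally. Your asserted key formula is corroborated by the paper's own sketch of Fact \ref{fact tensor b}, where the tensor $\widetilde b=(\mathrm{id}+J_\Sigma B_\Sigma)^{-1}\circ(\mathrm{id}-J_\Sigma B_\Sigma)$ is exactly the derivative of $\widetilde\phi_\Sigma=G_r\circ G_l^{-1}$ read through $G_l$. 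Granting that formula, the rest of your write-up is correct and complete: $\det(\mathrm{id}\pm J_\Sigma B_\Sigma)=1+\det B_\Sigma$ (since $\tr(J_\Sigma B_\Sigma)=0$ for $B_\Sigma$ self-adjoint), the Lorentzian Gauss equation $K_I=-1-\det B_\Sigma$ gives $\det(\mathrm{id}\pm J_\Sigma B_\Sigma)=-K_I$, equivariance plus the covering/Euler-characteristic argument promotes local to global diffeomorphisms, and the equality of the two Jacobians yields $\widetilde\phi_\Sigma^*\Omega_{\Hyp^2}=\Omega_{\Hyp^2}$, hence the symplectomorphism property in the quotient.

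There is, however, one genuine gap, which you flag yourself: the formula for $dG_l$ and $dG_r$ is asserted, never derived, and it is the only genuinely Anti-de Sitter step --- precisely the content that the citations \cite{Schlenker-Krasnov}, \cite{bon_schl}, \cite{ksurfaces} carry. A blind proof would need to carry it out: writing $\iota_\Sigma(\gamma)=(\gamma(y),y)$ as you do, one must differentiate along $\Sigma$, using left/right translation to identify tangent spaces with $\psl_2\R$ and expressing the variation of the foot point $y$ through the derivative of the unit normal, i.e.\ through $B_\Sigma$; without this computation the first bullet (and hence everything) is unproved. Two smaller glosses: your parenthetical ``by connectedness $K_I<0$ throughout'' is not quite right --- connectedness only gives a constant sign, and excluding the positive sign is the paper's Gauss--Bonnet remark (the quotient is closed with $\chi(S)<0$); and the continuous extension of $G_l,G_r$ to homeomorphisms $\partial_\infty\Sigma\to\RP^1$ does not follow immediately from Fact \ref{fact mess 1} (which only identifies the frontier of $\Sigma$) but is itself part of what the cited references establish.
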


In the second point, we remark that, since $\Sigma$ is invariant by $\rho(\pi_1(S))$ with quotient homeomorphic to the closed surface $S$, with $\chi(S)<0$, by the Gauss-Bonnet formula the curvature of the induced metric is always different from zero if and only if it is negative everywhere.

For the last point, it follows from the definitions that $\pi_l\circ\iota_\Sigma$ is $\rho$-$\rho_l$-equivariant, and $\pi_r\circ\iota_\Sigma$ is $\rho$-$\rho_r$-equivariant, hence the composition $(\pi_r\circ\iota_\Sigma)\circ(\pi_l\circ\iota_\Sigma)^{-1}$ induces a map in the quotient from $(S,h_l)=\Hyp^2/\rho_l(\pi_1(S))$ to $(S,h_r)=\Hyp^2/\rho_r(\pi_1(S))$. 

In \cite{bon_schl} and \cite{ksurfaces}, some more precise properties of the map $\phi_\Sigma$ were given. That is:

\begin{fact} \label{fact tensor b}
Given two Fuchsian representations $\rho_l$ and $\rho_r$ and any smooth spacelike surface $\Sigma$ in $\AdS^3$ invariant by the representation 
$$\rho=(\rho_l,\rho_r):\pi_1(S)\to \PSLR\times\PSLR~,$$
let $\phi_\Sigma:(S,\Omega_{h_l})\to(S,\Omega_{h_r})$ be the associated symplectomorphism.
Then there exists a smooth section $b\in\Gamma^\infty(\mathrm{End}(TS))$ such that:
\begin{enumerate}
\item $\phi_\Sigma^*h_r=h_l(b\cdot,b\cdot)$;
\item $\det b=1$;
\item $d^{\nabla_l}b=0$, where $\nabla_l$ is the Levi-Civita connection of the metric $h_l$;
\item $\tr b\neq -2$.
\end{enumerate}
\end{fact}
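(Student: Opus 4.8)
The plan is to read the tensor $b$ off the second fundamental form of $\Sigma$ and to reduce the Codazzi condition (3) to the Codazzi equation of surface theory in $\AdS^3$, the reduction being performed by Lemma \ref{lemma difference connections}. First I would pass to the quotient. Since $\Sigma$ is $\rho$-invariant of negative curvature, Fact \ref{fact projections} shows that $\pi_l\circ\iota_\Sigma$ and $\pi_r\circ\iota_\Sigma$ descend to diffeomorphisms $g_l\colon M\to(S,h_l)$ and $g_r\colon M\to(S,h_r)$ of the closed surface $M=\Sigma/\rho(\pi_1(S))$, with $\phi_\Sigma=g_r\circ g_l^{-1}$. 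Let $I$ be the induced metric on $M$, let $B$ be its shape operator and $J_I$ the complex structure of $I$. The key geometric input, which is the content of \cite{bon_schl} and \cite{ksurfaces} and encodes the $\AdS^3$-Gauss map, is that the differentials of the two projections are the endomorphisms $E_l:=\mathrm{Id}+J_IB$ and $E_r:=\mathrm{Id}-J_IB$, in the sense that $g_l^*h_l=I(E_l\cdot,E_l\cdot)$ and $g_r^*h_r=I(E_r\cdot,E_r\cdot)$. Because $B$ is $I$-self-adjoint, $J_IB$ is trace-free, so Cayley--Hamilton gives $(J_IB)^2=-(\det B)\,\mathrm{Id}$; hence $E_lE_r=E_rE_l=(1+\det B)\,\mathrm{Id}$ and $\det E_l=\det E_r=1+\det B$. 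As the two projections are local diffeomorphisms, $E_l,E_r$ are invertible, i.e. $\det B\neq -1$.

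I would then define $b$ to be the operator on $(S,h_l)$ corresponding, under the isometry $g_l\colon(M,I(E_l\cdot,E_l\cdot))\to(S,h_l)$, to $\widetilde b:=E_l^{-1}E_r$ on $M$. Property (1) holds by construction, since pulling $\phi_\Sigma^*h_r$ and $h_l(b\cdot,b\cdot)$ back to $M$ yields $I(E_r\cdot,E_r\cdot)$ and $I(E_l\widetilde b\cdot,E_l\widetilde b\cdot)=I(E_r\cdot,E_r\cdot)$ respectively. Property (2) is immediate: $\det b=\det\widetilde b=\det E_l^{-1}\,\det E_r=1$ (alternatively, $\det b=1$ follows from $\phi_\Sigma$ being an orientation-preserving symplectomorphism). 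For property (4), using $E_l^{-1}=(1+\det B)^{-1}E_r$ and $E_r^2=(1-\det B)\,\mathrm{Id}-2J_IB$ one finds $\tr b=\tr\widetilde b=\frac{2(1-\det B)}{1+\det B}$, and this never equals $-2$.

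The heart of the matter is property (3). Since $g_l$ is an isometry, $d^{\nabla_l}b=0$ is equivalent to $\widetilde b=E_l^{-1}E_r$ being Codazzi for the Levi--Civita connection $\nabla^{I_l}$ of $I_l:=I(E_l\cdot,E_l\cdot)$ on $M$. Here I would invoke the Codazzi equation of surface theory in the constant-curvature space $\AdS^3$, namely $d^{\nabla^I}B=0$; since $J_I$ is parallel for $\nabla^I$, this gives $d^{\nabla^I}E_l=d^{\nabla^I}E_r=0$, so that both $E_l$ and $E_r$ are Codazzi. Applying Lemma \ref{lemma difference connections} with the pair $(I,E_l)$ in place of $(h,b)$, the correction $1$-form $\alpha$ vanishes, being built from $\star d^{\nabla^I}E_l=0$, whence $\nabla^{I_l}_vw=E_l^{-1}\nabla^I_v(E_lw)$. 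Therefore, using $E_l\widetilde b=E_r$,
\[
d^{\nabla^{I_l}}\widetilde b(v,w)=E_l^{-1}\bigl(\nabla^I_v(E_rw)-\nabla^I_w(E_rv)-E_r[v,w]\bigr)=E_l^{-1}\,d^{\nabla^I}E_r(v,w)=0\,,
\]
which is (3); equivalently, by Corollary \ref{cor vanishing codazzi condition}, this says exactly that $\eta_{\phi_\Sigma,b}=0$.

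I expect the only genuine obstacle to be the geometric input of the first step: establishing the $\AdS^3$-Gauss map identities $g_l^*h_l=I(E_l\cdot,E_l\cdot)$, $g_r^*h_r=I(E_r\cdot,E_r\cdot)$, together with the Codazzi equation $d^{\nabla^I}B=0$, requires unwinding the identification $\isom_0(\AdS^3)=\PSLR\times\PSLR$ and the orthogonality description of timelike geodesics from Fact \ref{fact timelike geodesics}. Once these are in hand, the tensor $b$ built from $E_l^{-1}E_r$ does the job, and properties (1)--(4) follow from the purely algebraic manipulations above together with Lemma \ref{lemma difference connections}.
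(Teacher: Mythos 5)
Your proposal is correct and takes essentially the same route as the paper: your $\widetilde b=E_l^{-1}E_r$ is exactly the paper's section $\widetilde b=(\mathrm{id}+J_\Sigma B_\Sigma)^{-1}\circ(\mathrm{id}-J_\Sigma B_\Sigma)$ transported to $(S,h_l)$ via the left projection, resting on the same geometric inputs (the Gauss-map metric identities and the $\AdS^3$ Codazzi equation). The only difference is that the paper defers the verification of properties (1)--(4) to \cite[Proposition 4.9]{ksurfaces}, whereas you carry those computations out directly and correctly, including the clean use of Lemma \ref{lemma difference connections} for property (3).
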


We recall again that, for a connection $\nabla$, $d^\nabla b$ is a 2-form with values in $TS$, defined in Equation \eqref{eq exterior derivative}.

\begin{remark}
Let us make some comments on these conditions. Condition $(1)$ is equivalent to saying that $b$ is a smooth section of the subbundle $\isom(TS,\phi_\Sigma^*h_r,h_l)$, whose fiber over the point $x\in S$ consists of the linear isometries $b_x:(T_x S,\phi_\Sigma^*h_r)\to(T_x S,h_l)$. 

First, such a section $b$ is not uniquely determined. In fact, if $b$ satisfies the conditions $(1)$ and $(2)$, then $R\circ b$ still satisfies $(1)$ and $(2)$, where $R\in\Gamma^\infty(\mathrm{Isom}(TS,h_l,h_l))$ --- that is, the tensor $b$ can be post-composed with an isometry of the metric $h_l$ at any point. Conditions $(1)$ and $(2)$ imply that $\phi_\Sigma$ preserves the area forms of $h_l$ and $h_r$, hence $\phi_\Sigma:(S,\Omega_{h_l})\to(S,\Omega_{h_r})$ is a symplectomorphism.

Further, if $b$ satisfies $(1)$, $(2)$ and $(3)$, then for any number $\theta_0\in (0,2\pi)$ also the section $R_{\theta_0}\circ b$ satisfies $(1)$, $(2)$ and $(3)$, where 
$$R_{\theta_0}=(\cos\theta_0) \mathrm{id}+(\sin\theta_0) J_{h_l}\in\Gamma^\infty(\mathrm{Isom}(TS,h_l,h_l))$$
is the section which at every point $x\in(S,h_l)$ rotates $T_x S$ counterclockwise of an angle $\theta_0$, for the metric $h_l$. (Here $J_{h_l}$ is the almost-complex structure induced by the metric $h_l$.) Finally, as condition $(4)$ is an open condition, if $b$ satisfies $(1)$, $(2)$, $(3)$ and $(4)$, then $R_{\theta_0}\circ b$ still satisfies $(1)$, $(2)$, $(3)$ and $(4)$ for small $\theta_0$.
\end{remark}

Finally, we sketch an argument explaining why Fact \ref{fact tensor b} is true. On the metric universal cover $\Hyp^2$ of $(S,h_l)$, a particular section $\widetilde b\in\Gamma^\infty(\mathrm{End}(T\Hyp^2))$ satisfying the above properties can be constructed explicitly as
\begin{equation} \label{eq tilde b}
\widetilde b=(\mathrm{id}+J_\Sigma B_\Sigma)^{-1}\circ (\mathrm{id}-J_\Sigma B_\Sigma)~,
\end{equation}
where $B_\Sigma$ is the shape operator, and $J_\Sigma$ is the almost-complex structure of the first fundamental form, both computed with respect to the embedding 
$$\sigma:=(\pi_l\circ\iota_\Sigma)^{-1}:\Hyp^2\to\AdS^3~.$$
 It can be checked that $\widetilde b$ is $\rho_l$-invariant, and thus it defines a section $b\in\Gamma^\infty(\mathrm{End}(TS))$, which always satisfies 
$(1)$, $(2)$, $(3)$ and $(4)$. See \cite[Proposition 4.9]{ksurfaces} for more details.

\subsection*{Minimal Lagrangian diffeomorphisms}
A particular example of the above correspondence occurs when $\Sigma$ is a \emph{maximal} surface, that is, its shape operator $B_\Sigma$ satisfies $\tr B_\Sigma\equiv 0$. In this case, the associated map $\phi_\Sigma:(S,h_l)\to(S,h_r)$ will be \emph{minimal Lagrangian}, namely, it is a symplectomorphism for the induced area forms $\Omega_{h_l}$ and $\Omega_{h_r}$, and the graph of $\phi_\Sigma$ is a minimal surface in $(S\times S,h_l\oplus h_r)$. Hence this is a notion which depends on the hyperbolic metrics $h_l$ and $h_r$, not only on the symplectic form.  The term \emph{Lagrangian} comes from the fact that, if $\phi_\Sigma$ is a symplectomorphism, then the graph of $\phi_\Sigma$
is a Lagrangian surface in the symplectic manifold $(S\times S,\pi_l^*\Omega_{h_l}-\pi_r^*\Omega_{h_r})$.

Recall that an operator $b\in\Gamma^\infty(\mathrm{End}(TS))$ is $h$-self-adjoint, for $h$ a Riemannian metric on $S$, if
$$h(b\cdot,\cdot)=h(\cdot,b\cdot)~.$$
The correspondence between maximal surfaces and minimal Lagrangian diffeomorphisms is consequence of the 
following characterization (see \cite{labourieCP}):

\begin{lemma} \label{lemma minimal lag}
Let $h_l$ and $h_r$ be hyperbolic metrics on $S$. Then $\phi_{\ML}:(S,h_l)\to(S,h_r)$ is minimal Lagrangian if and only if there exists a smooth $h_l$-self-adjoint section $b_\lab\in\Gamma^\infty(\mathrm{End}(TS))$ such that:
\begin{enumerate}
\item $\phi_\Sigma^*h_r=h_l(b_\lab\cdot,b_\lab\cdot)$;
\item $\det b_\lab=1$;
\item $d^{\nabla_l}b_\lab=0$, where $\nabla_l$ is the Levi-Civita connection of the metric $h_l$.
\end{enumerate}
In this case, the tensor $b_\lab$ (which is unique) is called \emph{Labourie operator}. 
\end{lemma}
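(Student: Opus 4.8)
The plan is to reduce the statement to a single substantive equivalence --- that minimality of the graph is equivalent to the Codazzi condition $d^{\nabla_l}b_\lab=0$ --- after disposing of condition (1), the self-adjointness, and the uniqueness by a formal polar-decomposition argument. First I would observe that for the diffeomorphism $\phi=\phi_{\ML}$ the pulled-back metric $\phi^*h_r$ is a Riemannian metric on $S$, so there is a unique $h_l$-self-adjoint positive-definite operator $c\in\Gamma^\infty(\mathrm{End}(TS))$ with $\phi^*h_r=h_l(c\cdot,\cdot)$; setting $b_\lab:=c^{1/2}$, the unique positive $h_l$-self-adjoint square root, produces a self-adjoint section that automatically satisfies (1). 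This $b_\lab$ is unique, giving the uniqueness clause. (Note that the orientation-preserving isometry representative of Fact \ref{fact tensor b} is a different normalization; here the self-adjoint representative is the relevant one.) Thus self-adjointness and (1) are free, and the content of the lemma is that ``minimal Lagrangian'' is equivalent to the conjunction of (2) and (3).

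Next I would treat the Lagrangian condition, the easy half. The graph of $\phi$ is Lagrangian in $(S\times S,\pi_l^*\Omega_{h_l}-\pi_r^*\Omega_{h_r})$ precisely when $\phi^*\Omega_{h_r}=\Omega_{h_l}$, i.e. when $\phi$ preserves the area forms. Since $b_\lab$ is $h_l$-positive one has $\phi^*\Omega_{h_r}=(\det b_\lab)\,\Omega_{h_l}$, so the Lagrangian condition is exactly $\det b_\lab=1$, which is (2).

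The heart of the argument is the equivalence of minimality with the Codazzi condition (3). I would equip $S\times S$ with the product metric $g=h_l\oplus h_r$ and the complex structure $J=J_{h_l}\oplus(-J_{h_r})$, so that $(S\times S,g,J)$ is K\"ahler with K\"ahler form $\pi_l^*\Omega_{h_l}-\pi_r^*\Omega_{h_r}$ and, since each factor has curvature $-1$, is Einstein. Parametrizing the graph by $x\mapsto(x,\phi(x))$, with tangent frame $E_i=(e_i,d\phi\,e_i)$ for $\{e_1,e_2\}$ an $h_l$-orthonormal eigenframe of $b_\lab$, the Lagrangian property gives that $\{JE_1,JE_2\}$ frames the normal bundle, so the second fundamental form and the mean curvature are read off from the normal components of $\widetilde\nabla_{E_i}E_j$, where $\widetilde\nabla$ is the product Levi-Civita connection. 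The essential simplification is that $\widetilde\nabla$ splits as $\nabla_l\oplus\nabla_r$, and the target piece $\nabla_r$, pulled back via the diffeomorphism $\phi$, is the Levi-Civita connection of $\phi^*h_r=h_l(b_\lab\cdot,b_\lab\cdot)$; by Lemma \ref{lemma difference connections} this connection differs from $b_\lab^{-1}\nabla_l b_\lab$ exactly by the term $\alpha(\cdot)\,J_{\phi^*h_r}$, where the $1$-form $\alpha$ is a nonzero multiple of $\star d^{\nabla_l}b_\lab$. Carrying out the mean-curvature computation in this frame, and using $\det b_\lab=1$ together with the curvature normalization, I expect the mean-curvature vector to come out proportional to $\alpha$, so that $H\equiv 0$ if and only if $\alpha\equiv 0$, which by the formula for $\alpha$ in Lemma \ref{lemma difference connections} is equivalent to $d^{\nabla_l}b_\lab=0$. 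This is precisely (3).

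The main obstacle is this last computation: tracking the normal components of $\widetilde\nabla_{E_i}E_j$, assembling them into the mean-curvature vector, and recognizing the resulting $1$-form as exactly the Codazzi defect $\star d^{\nabla_l}b_\lab$ rather than merely a quantity sharing its zero set. To organize and cross-check this step I would exploit the K\"ahler--Einstein structure of $(S\times S,g,J)$: for a Lagrangian submanifold of an Einstein manifold the mean-curvature $1$-form $\iota_H(\pi_l^*\Omega_{h_l}-\pi_r^*\Omega_{h_r})$ restricted to the graph is closed, hence locally of the form $d\theta$ for a Lagrangian-angle function $\theta$, and minimality is equivalent to $\theta$ being locally constant. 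Computing $d\theta$ in the eigenframe of $b_\lab$ and comparing it, via Lemma \ref{lemma difference connections}, with the $1$-form $\alpha$ should again yield proportionality to $\star d^{\nabla_l}b_\lab$, and hence the equivalence minimal $\iff d^{\nabla_l}b_\lab=0$. In either approach, conditions (1), (2), self-adjointness and uniqueness are formal, and the Codazzi equation is the single point that requires genuine work.
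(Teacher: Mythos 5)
The first thing to note is that the paper itself contains no proof of this lemma: it is quoted as a known characterization, with a pointer to \cite{labourieCP} (see also \cite{Schoenharmonic}), so there is no internal argument to measure your proposal against. Judged on its own terms, your plan is correct, and it has the merit of being self-contained within the paper's framework: it reuses Lemma \ref{lemma difference connections}, and since by Corollary \ref{cor altra espressione eta} your $1$-form $\alpha$ \emph{is} the form $\eta_{\phi,b_\lab}$, your computation would exhibit the mean curvature of the graph as dual to $\eta_{\phi,b_\lab}$, tying the lemma directly to the machinery of Section \ref{sec algebraic} --- whereas the cited proofs of Labourie and Schoen go through harmonic maps. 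Your treatment of the formal parts is right: polar decomposition gives the positive $h_l$-self-adjoint representative satisfying (1), and the Lagrangian condition is exactly $\phi^*\Omega_{h_r}=(\det b_\lab)\,\Omega_{h_l}=\Omega_{h_l}$, i.e.\ (2).

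The one step you leave as ``I expect'' does work out, so the gap is fillable exactly along the lines you indicate. Transporting the graph to the diagonal of $(S\times S,h_l\oplus h')$ with $h'=\phi^*h_r$, the induced metric is $G=h_l((\mathrm{id}+b_\lab^2)\cdot,\cdot)$, the normal space consists of vectors $(-b_\lab^2 n,n)$, and one finds $H=0$ if and only if $\mathrm{tr}_G(\nabla'-\nabla_l)=0$. Computing this trace in an oriented $h_l$-orthonormal eigenframe of $b_\lab$, using Lemma \ref{lemma difference connections}, self-adjointness of $\nabla_l b_\lab$, and $\mathrm{tr}(b_\lab^{-1}\nabla_l b_\lab)=0$ (Jacobi's formula with $\det b_\lab=1$), yields the clean identity
\begin{equation*}
\mathrm{tr}_G\left(\nabla'-\nabla_l\right)=b_\lab^{-1}J_{h_l}\left(\star\, d^{\nabla_l}b_\lab\right)~,
\end{equation*}
so minimality is equivalent to $\star\, d^{\nabla_l}b_\lab=0$, i.e.\ to (3), confirming your predicted proportionality. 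Two small caveats. First, uniqueness: with ``self-adjoint'' alone, $b_\lab$ is only unique up to sign, since $-b_\lab$ also satisfies (1)--(3); your construction singles out the positive representative, which is the intended normalization. Second, the ``curvature normalization'' you invoke is never needed for this equivalence --- the computation above is valid for arbitrary Riemannian metrics; curvature $-1$ enters only in the existence and uniqueness statement (Theorem \ref{thm minimal lag}), not in the characterization.
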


Hence we have the following:

\begin{fact} \label{fact maximal min lag}
Given two Fuchsian representations $\rho_l$ and $\rho_r$, if $\Sigma_0$ is a maximal surface in $\AdS^3$ invariant by the representation 
$$\rho=(\rho_l,\rho_r):\pi_1(S)\to \PSLR\times\PSLR~,$$
then the associated diffeomorphism $\phi_{\Sigma_0}:(S,h_l)\to(S,h_r)$ is minimal Lagrangian. 
\end{fact}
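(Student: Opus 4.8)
The plan is to produce, starting from a maximal $\Sigma_0$, a tensor satisfying the three hypotheses of Lemma \ref{lemma minimal lag}, the self-adjointness condition being the only feature that distinguishes the minimal Lagrangian case from the general statement of Fact \ref{fact tensor b}. First I would invoke Fact \ref{fact tensor b}, which already furnishes a section $b\in\Gamma^\infty(\mathrm{End}(TS))$ with $\phi_{\Sigma_0}^*h_r=h_l(b\cdot,b\cdot)$, $\det b=1$ and $d^{\nabla_l}b=0$; in the notation of Equation \eqref{eq tilde b} this $b$ descends from the explicit lift $\widetilde b=(\mathrm{id}+J_\Sigma B_\Sigma)^{-1}(\mathrm{id}-J_\Sigma B_\Sigma)$. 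Thus the whole content to be proved is that, under the maximality assumption, $\widetilde b$ may be taken $h_l$-self-adjoint: by Lemma \ref{lemma minimal lag} this would immediately give that $\phi_{\Sigma_0}$ is minimal Lagrangian, with $b$ the unique Labourie operator.

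The heart of the argument is a pointwise linear-algebra computation on $T\Sigma_0$. Writing $A:=J_\Sigma B_\Sigma$, I would first observe that maximality $\tr B_\Sigma\equiv 0$ forces $A$ to be self-adjoint for the first fundamental form $I$: indeed $B_\Sigma$ is $I$-self-adjoint, $J_\Sigma$ is $I$-skew-adjoint, and a trace-free $I$-self-adjoint endomorphism in dimension two anticommutes with $J_\Sigma$, so that $A^{*}=-B_\Sigma J_\Sigma=J_\Sigma B_\Sigma=A$. Next I would use the standard relation between $I$ and the pullback metric by the left Gauss map $\pi_l\circ\iota_{\Sigma_0}$, namely that this pullback equals $I((\mathrm{id}+A)\cdot,(\mathrm{id}+A)\cdot)$ and corresponds to $h_l$ under the identification $\sigma=(\pi_l\circ\iota_{\Sigma_0})^{-1}$ (this is the computation underlying Fact \ref{fact projections}, see \cite{Schlenker-Krasnov} and \cite{bon_schl}). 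Since $\widetilde b=(\mathrm{id}+A)^{-1}(\mathrm{id}-A)$ is a rational function of the self-adjoint operator $A$, the operators $A$, $\mathrm{id}\pm A$ and $\widetilde b$ mutually commute, and a direct expansion gives $I((\mathrm{id}+A)\widetilde b v,(\mathrm{id}+A)w)=I((\mathrm{id}-A)v,(\mathrm{id}+A)w)=I(v,w)-I(Av,Aw)$, a symmetric expression in $v$ and $w$; this is exactly the statement that $\widetilde b$ is self-adjoint for $h_l$.

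Finally I would transfer this to $(S,h_l)$: the tensor $\widetilde b$ is $\rho_l$-invariant, as noted after Equation \eqref{eq tilde b}, hence so is its $h_l$-self-adjointness, and therefore the descended $b$ is an $h_l$-self-adjoint section of $\mathrm{End}(TS)$ still satisfying conditions $(1)$, $(2)$ and $(3)$. Applying Lemma \ref{lemma minimal lag} concludes that $\phi_{\Sigma_0}$ is minimal Lagrangian and that $b=b_\lab$. I expect the only delicate point to be the bookkeeping in the second paragraph: one must use the correct form of the Gauss-map metric identity, with the right signs and normalization in the Anti-de Sitter setting, and keep straight that self-adjointness is needed for $h_l\cong I((\mathrm{id}+A)\cdot,(\mathrm{id}+A)\cdot)$ rather than for the intrinsic metric $I$ — although, as it happens, $\widetilde b$ turns out to be self-adjoint for both. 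The linear-algebra identities and the equivariance descent are routine.
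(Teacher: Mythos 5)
Your proof is correct and takes essentially the same approach as the paper: the paper's proof is the one-line remark that one checks the tensor $\widetilde b$ of Equation \eqref{eq tilde b} is $h_l$-self-adjoint when $\tr B_{\Sigma_0}\equiv 0$, deferring the computation to \cite{bon_schl}, and then applies Lemma \ref{lemma minimal lag}. Your pointwise argument (maximality makes the trace-free $I$-self-adjoint $B_\Sigma$ anticommute with $J_\Sigma$, so $A=J_\Sigma B_\Sigma$ is $I$-self-adjoint, and hence $I((\mathrm{id}-A)\cdot,(\mathrm{id}+A)\cdot)=I(\cdot,\cdot)-I(A\cdot,A\cdot)$ is symmetric) correctly supplies exactly the details the paper delegates to the reference.
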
 
The proof of Fact \ref{fact maximal min lag} follows from checking that the tensor $b$ defined in Equation \eqref{eq tilde b} is $h_l$-self-adjoint if $\Sigma_0$ is a maximal surface, that is, if $\tr B_{\Sigma_0}\equiv 0$. See \cite{bon_schl}.

The following is a theorem of existence and uniqueness of minimal Lagrangian maps isotopic to the identity between any two compact oriented hyperbolic surfaces. It was proved in \cite{labourieCP} and \cite{Schoenharmonic}, and it can also be inferred from the existence and uniqueness of the invariant maximal surface $\Sigma_0$, see \cite{bbzads} and \cite{bon_schl}.

\begin{theorem} \label{thm minimal lag}
Given any two hyperbolic metrics $h,h'$ on the compact oriented surface $S$, there exists a unique minimal Lagrangian diffeomorphism $\phi_{\ML}:(S,h)\to(S,h')$ isotopic to the identity.
\end{theorem}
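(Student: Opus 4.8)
The plan is to reduce the statement to classical existence and uniqueness theorems for harmonic maps between closed hyperbolic surfaces, in the spirit of Schoen. The key reformulation starts from Lemma \ref{lemma minimal lag}: a diffeomorphism $\phi_\ML$ isotopic to the identity is minimal Lagrangian exactly when its graph $\Gamma\subset(S\times S,h\oplus h')$ is a minimal surface. Endowing $\Gamma$ with the conformal structure $\sigma$ of its induced metric, the inclusion $\Gamma\hookrightarrow S\times S$ is a conformal harmonic map, and since the two projections are totally geodesic, the restrictions of $\pi_l$ and $\pi_r$ give harmonic maps $u\colon(S,\sigma)\to(S,h)$ and $v\colon(S,\sigma)\to(S,h')$, both isotopic to the identity, with $\phi_\ML=v\circ u^{-1}$. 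Moreover $\Gamma$ is minimal precisely when the induced metric $u^*h+v^*h'$ is conformal for $\sigma$, i.e. when its $(2,0)$-part vanishes; since this part is the sum of the Hopf differentials, the minimal Lagrangian condition becomes
$$\Phi(u)+\Phi(v)=0~.$$
Thus constructing $\phi_\ML$ amounts to finding a single conformal structure $\sigma$ on $S$ whose identity-isotopic harmonic maps to $(S,h)$ and to $(S,h')$ have opposite Hopf differentials.

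I would then phrase this as a zero-finding problem on Teichm\"uller space. By Eells--Sampson existence and Hartman uniqueness (both targets have curvature $-1<0$), every $\sigma\in\Teich(S)$ admits a unique identity-isotopic harmonic map to $(S,h)$, which is a diffeomorphism by Schoen--Yau and Sampson, and whose Hopf differential $\Phi_h(\sigma)$ is a holomorphic quadratic differential on $(S,\sigma)$; similarly one gets $\Phi_{h'}(\sigma)$. Hence
$$s(\sigma):=\Phi_h(\sigma)+\Phi_{h'}(\sigma)$$
is a smooth section of the bundle $\cQ\to\Teich(S)$ of holomorphic quadratic differentials, a real vector bundle of rank $6g-6$ over a base of the same dimension (here $g\geq 2$ is the genus of $S$). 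A minimal Lagrangian map corresponds exactly to a zero of $s$.

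For existence I would run a degree argument. First, $s$ should be proper: as $\sigma$ leaves every compact subset of $\Teich(S)$ the energies of the harmonic maps to the two fixed targets blow up, and one checks that $\|s(\sigma)\|\to\infty$, so $s(\sigma)$ cannot approach the zero section. Then I would deform the target $h'$ continuously to $h$; at $h'=h$ the section becomes $2\Phi_h$, whose only zero is the conformal class of $h$ and which is nondegenerate there, giving local degree $\pm1$. Provided properness holds uniformly along the deformation---so that no zero escapes to infinity---the degree is preserved, and $s$ has a zero for the original $h'$, yielding the desired minimal Lagrangian map.

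The main obstacle is uniqueness, which the degree count alone does not deliver and which requires a genuinely analytic input. Given two minimal Lagrangian maps, I would pass to their positive self-adjoint Labourie operators $b_1,b_2$ from Lemma \ref{lemma minimal lag}, each satisfying $\det b_i=1$ and the Codazzi equation $d^{\nabla}b_i=0$, and set up a maximum principle for a comparison quantity such as $\tr(b_1b_2^{-1})$ (or, equivalently, for the difference of the two harmonic-map energy densities): the Gauss equation, together with the constraint $\det b_i=1$, should force the Laplacian of this quantity to have a definite sign at an interior extremum, so that it is constant and $b_1=b_2$. This monotonicity/convexity step is the delicate heart of the argument. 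Alternatively, one can import uniqueness directly from the uniqueness of the equivariant maximal surface $\Sigma_0$ in $\AdS^3$ through Fact \ref{fact maximal min lag}, which is the route indicated immediately after the statement.
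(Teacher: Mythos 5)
First, a point of comparison: the paper does not actually prove Theorem \ref{thm minimal lag}; it quotes it from \cite{labourieCP} and \cite{Schoenharmonic}, remarking that it can also be inferred from existence and uniqueness of the invariant maximal surface (\cite{bbzads}, \cite{bon_schl}). Your reformulation --- two harmonic diffeomorphisms $u,v$ from a common conformal structure $\sigma$, $\phi_\ML=v\circ u^{-1}$, and the condition $\Phi(u)+\Phi(v)=0$ --- is exactly the framework of \cite{Schoenharmonic}, and that dictionary is correct. But two steps of your argument have genuine gaps. For existence, your properness claim is not supported by what you say: the blow-up of the two energies $E_h(\sigma)$, $E_{h'}(\sigma)$ as $\sigma$ leaves compact sets does \emph{not} prevent $\Phi_h(\sigma)$ and $-\Phi_{h'}(\sigma)$ from becoming asymptotically equal, so that $s(\sigma)=\Phi_h(\sigma)+\Phi_{h'}(\sigma)$ stays bounded; ruling out precisely this cancellation along divergent sequences (and uniformly along your deformation of $h'$ to $h$, which the homotopy invariance of the degree requires) \emph{is} the content of properness, and you give no argument for it. The gap is avoidable: up to a constant factor, $\mathrm{Re}\,s(\sigma)$ is the differential of the total energy $E(\sigma)=E_h(\sigma)+E_{h'}(\sigma)$ on $\Teich(S)$, so zeros of $s$ are exactly critical points of $E$; since $E$ is proper (Wolf, Tromba --- this is the route of \cite{Schoenharmonic}) and smooth, it attains a minimum, which is the desired zero, with no degree theory and no properness of $s$ needed. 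Separately, your opening assertion that $\phi_\ML$ is minimal Lagrangian ``exactly when its graph is minimal'' silently absorbs the symplectomorphism condition required by Lemma \ref{lemma minimal lag}: a zero of $s$ gives minimality of the graph, but area-preservation must then be \emph{derived} from $\Phi(u)+\Phi(v)=0$ by an extra maximum-principle argument. Writing $H_u=\|\partial u\|^2$, $L_u=\|\bar\partial u\|^2$, the Bochner identity $\Delta\log H_u=2(H_u-L_u)+2K_\sigma$ together with $H_uL_u=\|\Phi\|^2=H_vL_v$ gives $\Delta\log(H_u/H_v)=2(H_u-H_v)\bigl(1+\|\Phi\|^2/(H_uH_v)\bigr)$, whence $H_u\equiv H_v$ by evaluating at extrema, and hence equal Jacobians, i.e. $(v\circ u^{-1})^*\Omega_{h'}=\Omega_h$. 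Without this step you have produced a minimal graph, not a minimal Lagrangian map in the paper's sense.

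The uniqueness part is the more serious gap, and you partly acknowledge this. The proposed quantity $\tr(b_1b_2^{-1})$ has a structural problem: $b_1$ and $b_2$ are isometric identifications of $h$ with two \emph{different} metrics $\phi_1^*h'$ and $\phi_2^*h'$ (isotopic, but not equal), so the pair $(b_1,b_2)$ satisfies no evident coupled elliptic system on which a maximum principle could run; ``the Laplacian should have a definite sign'' is at this stage a hope rather than an argument. The known proofs are either Schoen's analytic one or the Anti-de Sitter route: uniqueness of the invariant maximal surface \cite{bbzads} transported through the correspondence with minimal Lagrangian maps. For the latter, note that Fact \ref{fact maximal min lag} alone is not enough --- you also need its converse, that \emph{every} minimal Lagrangian diffeomorphism isotopic to the identity arises from an invariant maximal surface (this is in \cite{bon_schl}, \cite{ksurfaces}); otherwise uniqueness of maximal surfaces says nothing about hypothetical minimal Lagrangian maps outside the image of the construction. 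If you take that route, as the paper itself indicates, your uniqueness step reduces to the paper's citation rather than an independent proof; as submitted, the proposal establishes neither uniqueness nor, because of the properness gap, existence.
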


\subsection*{Proof of the main theorem}

Let us now prove the main result of this paper, which was already stated in the introduction. As usual, given two Fuchsian representations $\rho_l,\rho_r:\pi_1(S)\to\PSLR$, we denote $(S,h_l)=\Hyp^2/\rho_l(\pi_1(S))$ and similarly $(S,h_r)=\Hyp^2/\rho_r(\pi_1(S))$, and $\Omega_{h_l}$ and 
$\Omega_{h_r}$ are the induced area forms.

\begin{theorem} \label{main thm}
Let $\rho_l,\rho_r:\pi_1(S)\to\PSLR$ be Fuchsian representations and let $\Sigma\subset \AdS^3$ be a smooth, embedded spacelike surface invariant for the representation $\rho=(\rho_l,\rho_r):\pi_1(S)\to\isom_0(\AdS^3)$, whose curvature is negative. 
Then 
$$\phi_\Sigma=\phi_{\ML}\circ \psi~,$$
where 
\begin{itemize}
\item $\phi_\Sigma:(S,\Omega_{h_l})\to (S,\Omega_{h_r})$ is the diffeomorphism associated to $\Sigma$;
\item  $\phi_{\ML}:(S,{h_l})\to (S,{h_r})$ is the unique minimal Lagrangian diffeomorphism isotopic to the identity;
\item $\psi$ is a Hamiltonian symplectomorphism, for the area form $\Omega_{h_l}$.
\end{itemize}
\end{theorem}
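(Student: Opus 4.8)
The plan is to reduce the statement to an application of the machinery built in the first section, by showing that the symplectomorphism $\phi_\ML^{-1}\circ\phi_\Sigma$ lies in the kernel of the flux homomorphism for $(S,\Omega_{h_l})$, and then invoking Theorem \ref{thm flux hamiltonian}. Concretely, set $\psi:=\phi_\ML^{-1}\circ\phi_\Sigma$, so that $\phi_\Sigma=\phi_\ML\circ\psi$ and $\psi\in\symp_0(S,\Omega_{h_l})$ (both $\phi_\Sigma$ and $\phi_\ML$ are symplectomorphisms isotopic to the identity between the same area forms, so their composition preserves $\Omega_{h_l}$). The goal is to prove $\psi\in\ham(S,\Omega_{h_l})$, which by Theorem \ref{thm flux hamiltonian} is equivalent to $\flux(\psi)=0$, and by Proposition \ref{prop coincidence} it suffices to show $\boh_{h_l,h_l}(\psi)=0$ in the quotient $H^1_\dR(S,\R)/H^1_\dR(S,2\pi\Z)$. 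In fact, by Corollary \ref{cor codazzi hamiltonian} I only need $\psi$ to lie in the connected component of the identity of $\mathrm{Ker}(\boh_{h_l,h_l})$, which holds automatically since $\psi$ is isotopic to the identity once $\boh_{h_l,h_l}(\psi)=0$.

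First I would compute $\boh_{h_l,h_l}(\psi)$ using the composition formula of Lemma \ref{composition formula}. Writing $\phi_\Sigma=\phi_\ML\circ\psi$ with $\psi\in\symp_0(S,\Omega_{h_l})$ and $\phi_\ML\in\symp_0(S,\Omega_{h_l},\Omega_{h_r})$, that lemma gives
\begin{equation*}
\boh_{h_l,h_r}(\phi_\Sigma)=\boh_{h_l,h_l}(\psi)+\boh_{h_l,h_r}(\phi_\ML)~.
\end{equation*}
Rearranging, $\boh_{h_l,h_l}(\psi)=\boh_{h_l,h_r}(\phi_\Sigma)-\boh_{h_l,h_r}(\phi_\ML)$. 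The strategy is now to show that both terms on the right vanish, so that $\boh_{h_l,h_l}(\psi)=0$ as desired. For each of the two diffeomorphisms, the relevant fact is that there exists a distinguished section $b$ of $\isom(TS,(\cdot)^*h_r,h_l)$ which is Codazzi, i.e. satisfies $d^{\nabla_l}b=0$.

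The heart of the argument is then Corollary \ref{cor vanishing codazzi condition}, which states that $\eta_{\psi,b}=0$ precisely when $d^{\nabla_l}b=0$. For $\phi_\ML$, Lemma \ref{lemma minimal lag} provides the Labourie operator $b_\lab$, which is $h_l$-self-adjoint with $\det b_\lab=1$ and $d^{\nabla_l}b_\lab=0$; hence $\eta_{\phi_\ML,b_\lab}=0$ identically and $\boh_{h_l,h_r}(\phi_\ML)=[0]=0$. For $\phi_\Sigma$, Fact \ref{fact tensor b} furnishes a section $b$ with $\phi_\Sigma^*h_r=h_l(b\cdot,b\cdot)$, $\det b=1$, and crucially $d^{\nabla_l}b=0$; again by Corollary \ref{cor vanishing codazzi condition} we get $\eta_{\phi_\Sigma,b}=0$ and thus $\boh_{h_l,h_r}(\phi_\Sigma)=0$. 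Since $\boh_{h,h'}(\psi)$ is defined as $[\eta_{\psi,b}]$ for \emph{any} admissible section $b$ (the class being well-defined in the quotient by Lemma \ref{lemma integer coefficients}), it does not matter that the $b$ produced is not $h_l$-self-adjoint in the $\phi_\Sigma$ case. Combining the two vanishings yields $\boh_{h_l,h_l}(\psi)=0$, hence $\flux(\psi)=0$ by Proposition \ref{prop coincidence} and the fact that $H^1_\dR(S,2\pi\Z)$ is the exact ambiguity, and finally $\psi\in\ham(S,\Omega_{h_l})$ by Theorem \ref{thm flux hamiltonian}.

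I expect the main subtlety to be the passage between the \emph{quotient-valued} homomorphism $\boh_{h_l,h_l}$ and the genuine flux. Showing $\boh_{h_l,h_l}(\psi)=0$ in $H^1_\dR(S,\R)/H^1_\dR(S,2\pi\Z)$ only tells me $\flux(\psi)\in H^1_\dR(S,2\pi\Z)$, i.e. that $\flux(\psi)$ has periods in $2\pi\Z$, not that it is exactly zero. To upgrade to the vanishing of the flux itself, I would use that $\psi$ is isotopic to the identity together with the continuity/homomorphism properties: by Corollary \ref{cor codazzi hamiltonian}, the identity component of $\mathrm{Ker}(\boh_{h_l,h_l})$ is exactly $\ham(S,\Omega_{h_l})$, and since $\psi$ can be connected to the identity through a path in $\mathrm{Ker}(\boh_{h_l,h_l})$ (the vanishing of the Codazzi tensor being preserved along the natural family, or by observing that $\boh$ is constant on the connected fiber), we conclude $\psi\in\ham(S,\Omega_{h_l})$. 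This discreteness-of-periods issue is the one genuine gap to handle carefully; the rest is an assembly of Corollary \ref{cor vanishing codazzi condition}, Lemma \ref{composition formula}, and the structural facts from Anti-de Sitter geometry.
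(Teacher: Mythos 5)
Your first half is exactly the paper's argument: setting $\psi=\phi_\ML^{-1}\circ\phi_\Sigma$, applying Lemma \ref{composition formula}, and killing both $\boh_{h_l,h_r}(\phi_\Sigma)$ and $\boh_{h_l,h_r}(\phi_\ML)$ via Corollary \ref{cor vanishing codazzi condition} together with the Codazzi sections from Fact \ref{fact tensor b} and Lemma \ref{lemma minimal lag}. That part is correct. You also correctly identify the remaining gap: this only yields $\flux(\psi)\in H^1_\dR(S,2\pi\Z)$, not $\flux(\psi)=0$.

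But your proposed fixes for that gap do not work. The claim in your first paragraph --- that membership in the identity component of $\mathrm{Ker}(\boh_{h_l,h_l})$ ``holds automatically since $\psi$ is isotopic to the identity once $\boh_{h_l,h_l}(\psi)=0$'' --- is false: $\mathrm{Ker}(\boh_{h_l,h_l})=\flux^{-1}\bigl(H^1_\dR(S,2\pi\Z)\bigr)$ is a disconnected subgroup of $\symp_0(S,\Omega_{h_l})$, whose components are the cosets of $\ham(S,\Omega_{h_l})$ indexed by $H^1_\dR(S,2\pi\Z)$; every element of it is isotopic to the identity inside $\symp_0$, but only those in the component $\flux^{-1}(0)$ are Hamiltonian, and an arbitrary isotopy to the identity has no reason to stay inside the kernel. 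Likewise, ``$\boh$ is constant on the connected fiber'' is circular (you would first need to know $\psi$ lies in the identity component, which is the very thing to prove), and ``the natural family'' along which the Codazzi tensor is preserved is never constructed. This is precisely where the paper injects a genuinely geometric input that your proposal is missing: by Fact \ref{fact maximal min lag} the invariant maximal surface $\Sigma_0$ satisfies $\phi_{\Sigma_0}=\phi_\ML$, and inside the maximal globally hyperbolic quotient $M\cong S\times\R$ one can choose a smooth isotopy of closed spacelike surfaces $f_t(S)$ from $\overline\Sigma_0$ to $\overline\Sigma$. Writing $\phi_{\Sigma_t}=\phi_\ML\circ\psi_t$ for the lifted surfaces, the first half of the argument applies at every $t$, so $t\mapsto\flux(\psi_t)$ is a continuous path in the discrete group $H^1_\dR(S,2\pi\Z)$ with $\flux(\psi_0)=\flux(\mathrm{id})=0$; hence $\flux(\psi)=\flux(\psi_1)=0$, and Theorem \ref{thm flux hamiltonian} concludes. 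Without this interpolation through invariant spacelike surfaces (or some substitute for it), your argument proves only that $\flux(\psi)$ has periods in $2\pi\Z$, which is strictly weaker than the theorem.
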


\begin{proof}
Let $\psi=\phi_{\ML}^{-1}\circ\phi_{\Sigma}$. Let $b$ the section of $\isom(TS,\phi_\Sigma^*h_r,h_l)$ given by Fact \ref{fact tensor b}, and $b_\lab$ the analogous section for $\phi_\ML$, which was also introduced in Lemma \ref{lemma minimal lag}. Using Lemma \ref{composition formula}, we have
$$\boh_{h,h'}(\phi_{\Sigma})=\boh_{h,h'}(\phi_\ML)+\boh_{h,h}(\psi)~.$$
Now, since $d^{\nabla_l}b=d^{\nabla_l}b_\lab=0$ by Fact \ref{fact tensor b} (and Lemma \ref{lemma minimal lag}), using Corollary \ref{cor vanishing codazzi condition} we have $\eta_{\phi_{\Sigma},b}=\eta_{\phi_{\ML},b_\lab}=0$, hence $\boh_{h,h'}(\phi_{\Sigma})=\boh_{h,h'}(\phi_\ML)=0$ and therefore $\boh_{h,h}(\psi)=0$. From Definition \ref{defi boh} and Proposition \ref{eq diagram commutative}, this implies that $\flux(\psi)\in H^1_{\dR}(S,2\pi\Z)$.

On the other hand, consider the maximal surface $\Sigma_0$ invariant by the representation $\rho$ (as in Fact \ref{fact maximal min lag}).  By the results in \cite{mess}, the surfaces $\Sigma$ and $\Sigma_0$ induce closed embedded surfaces in a (so-called maximal globally hyperbolic) three-manifold $M$ homeomorphic to $S\times\R$. Let us call $\overline \Sigma$ and $\overline \Sigma_0$ be the surfaces induced in the quotient. Both $\overline \Sigma$ and $\overline \Sigma_0$ are homotopic to $S\times\{\star\}$. Hence one can find a smooth isotopy $f_t:S\to M$ which is a homotopy equivalence at every time $t$, such that $f_0(S)=\overline \Sigma_0$, and $f_1(S)=\overline \Sigma$, and $f_t(S)$ is a smooth spacelike surface for every $t$. Lifting to $\AdS^3$, the surfaces  $\Sigma_t$ which project to $f_t(S)$ induce maps $\phi_{\Sigma_t}$ which vary smoothly. If we put $\phi_{\Sigma_t}=\phi_{\ML}\circ \psi_t$, the corresponding classes $\flux(\psi_t)$ vary smoothly, and are all in $H^1_{\dR}(S,2\pi\Z)$ by the same argument as the first part of this proof. Moreover, $\psi_0$ is the identity, and therefore $\flux(\psi_0)=0$. Since $H^1_{\dR}(S,2\pi\Z)$ is discrete, it then follows that $\flux(\psi_1)=\flux(\psi)=0$. Hence by Theorem \ref{thm flux hamiltonian}, $\psi$ is Hamiltonian. This concludes the proof.
\end{proof}

Let us remark that the second part is exactly in the spirit of Corollary \ref{cor codazzi hamiltonian}, by using the fact that there is a smooth interpolation between $\phi_\Sigma$ and $\phi_\ML$.
By specializing to the case in which the two hyperbolic metrics coincide, we get the following:

\begin{cor}
Given a Fuchsian representation $\rho_0:\pi_1(S)\to\PSLR$, let  
$$\phi_\Sigma:\Hyp^2/\rho_0(\pi_1(S))\to \Hyp^2/\rho_0(\pi_1(S))$$ be a symplectomorphism induced by a smooth embedded spacelike surface $\Sigma\subset\AdS^3$ invariant by $(\rho_0,\rho_0)(\pi_1(S))<\PSLR\times\PSLR$. Then $\phi_\Sigma$ is Hamiltonian.
\end{cor}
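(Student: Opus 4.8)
The plan is to deduce this statement directly from the main theorem, Theorem~\ref{main thm}, specialized to the case where the two Fuchsian representations coincide. Setting $\rho_l=\rho_r=\rho_0$, the two induced hyperbolic metrics on $S$ satisfy $h_l=h_r=:h$, and Theorem~\ref{main thm} then yields a factorization $\phi_\Sigma=\phi_{\ML}\circ\psi$, where $\phi_{\ML}:(S,h)\to(S,h)$ is the unique minimal Lagrangian diffeomorphism isotopic to the identity and $\psi\in\ham(S,\Omega_h)$.

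The only substantive point is to identify $\phi_{\ML}$ when the source and target metrics agree. I claim that in this case $\phi_{\ML}=\mathrm{id}$. To see this I would first observe that the identity map $\mathrm{id}:(S,h)\to(S,h)$ is itself minimal Lagrangian: it is trivially a symplectomorphism for $\Omega_h$, and its graph is the diagonal in $(S\times S,h\oplus h)$, which is the fixed-point set of the isometric involution exchanging the two factors, hence totally geodesic and in particular minimal. Since the identity is manifestly isotopic to the identity, the uniqueness part of Theorem~\ref{thm minimal lag} forces $\phi_{\ML}=\mathrm{id}$. Equivalently, one may invoke Lemma~\ref{lemma minimal lag} with Labourie operator $b_\lab=\mathrm{id}$, which is $h$-self-adjoint, satisfies $\det b_\lab=1$, and is parallel so that $d^{\nabla}b_\lab=0$; by the uniqueness of $b_\lab$ this corresponds precisely to $\phi_{\ML}=\mathrm{id}$.

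With $\phi_{\ML}=\mathrm{id}$ in hand, the factorization collapses to $\phi_\Sigma=\psi$, and since $\psi$ is Hamiltonian by Theorem~\ref{main thm}, the conclusion follows at once. I expect essentially no obstacle here: the entire content has already been established in the proof of Theorem~\ref{main thm}, and the present corollary is the transparent specialization to a single hyperbolic structure. The one step that genuinely requires a word of justification is the identification $\phi_{\ML}=\mathrm{id}$, which rests solely on the uniqueness of minimal Lagrangian maps together with the elementary fact that the diagonal of $(S\times S,h\oplus h)$ is totally geodesic.
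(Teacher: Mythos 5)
Your proof is correct and takes essentially the same route as the paper, which states this corollary as the direct specialization of Theorem~\ref{main thm} to $\rho_l=\rho_r=\rho_0$ (hence $h_l=h_r$). Your identification $\phi_{\ML}=\mathrm{id}$ --- via the fact that the diagonal of $(S\times S,h\oplus h)$ is totally geodesic, together with the uniqueness in Theorem~\ref{thm minimal lag} --- is exactly the detail the paper leaves implicit, and it is justified correctly.
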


\subsection*{Reconstructing smooth spacelike surfaces}

We conclude by a discussion about the question whether the condition that $\phi=\phi_{\ML}\circ \psi$ for a symplectomorphism $\phi:(S,h_l)\to (S,h_r)$, where $\phi_\ML$ is minimal Lagrangian and $\psi$ is Hamiltonian, is also sufficient to obtain $\phi$ as the symplectomorphism associated to an invariant surface in $\AdS^3$. In fact, by Theorem \ref{main thm} this is a necessary condition.

Using the tools from \cite{ksurfaces}, given a symplectomorphism $\phi:(S,\Omega_l)\to (S,\Omega_r)$ and any section $b\in\Gamma(\isom(TS,\phi^*h_r,h_l))$, a map $\sigma_{\phi,b}:\Hyp^2\to\AdS^3$ can be reconstructed explicitly, such that $\sigma_{\phi,b}$ is equivariant for the action of $\rho_l:\pi_1(S)\to \PSL(2,\R)$ on $\Hyp^2$ and of the pair $\rho=(\rho_l,\rho_r)$ on $\AdS^3$. 

The map $\sigma_{\phi,b}:\Hyp^2\to\AdS^3$ is defined by the condition that $\sigma_{\phi,b}(x)$ is the unique isometry $\sigma\in\PSL(2,\R)$ (recalling that $\AdS^3$ is the Lie group $\PSL(2,\R)$) such that
\begin{equation} \label{definition1}
\begin{cases} \sigma(\phi(x))=x \\
d\sigma_{\phi(x)}\circ d\phi_x=-b_x 
\end{cases}~.
\end{equation}
Moreover, such map is orthogonal to the foliation in timelike lines of the form $L_{x,\phi(x)}$ provided $b$ satisfies $d^{\nabla_l}b=0$ (\cite[Corollary 5.7]{ksurfaces}), but in general it is not an immersion. 

Therefore, given a section $b\in\Gamma(\isom(TS,\phi^*h_r,h_l))$, suppose $[\eta_{\phi,b}]=0\in H^1_\dR(S,\R)$. (This assumption is satisfied if $\phi=\phi_\ML\circ \psi$, for $\phi_\ML:(S,h_l)\to(S,h_r)$ the minimal Lagrangian diffeomorphism and $\psi:(S,\Omega_l)\to(S,\Omega_l)$ Hamiltonian, by an argument similar to above.)
 Then by the same argument of the proof of Lemma \ref{lemma integer coefficients}, one can find a function $\theta:S\to\R$ such that $\eta_{\phi,R_\theta b}=0$, which is equivalent to $d^{\nabla_l}(R_\theta b)=0$ by Corollary \ref{cor vanishing codazzi condition}. If $\sigma_{\phi,R_\theta b}$ is an embedding, then the image $\Sigma=\sigma_{\phi,R_\theta b}(\Hyp^2)$ is a smoothly embedded spacelike surface, invariant by the representation $\rho=(\rho_l,\rho_r)$, whose associated map is $\phi$. 
 
However, the map $\sigma_{\phi,R_\theta b}$ can be singular. Observe also that the function $\theta$ is not uniquely determined, but there are possible choices which differ by adding a constant. In general, $\sigma_{\phi,R_\theta b}$ might be non-singular only for some
of these choices of $\theta$ (i.e. up to adding a constant function). 

In \cite{bonsepequivariant}, the authors showed that such a map $\sigma_{\phi,R_\theta b}$ always lifts to an embedding into the future timelike unit tangent bundle of the universal cover $\widetilde{\AdS^3}$. Finally, let us remark that if one picks a section $b\in\Gamma(\isom(TS,\phi^*h_r,h_l))$ such that $[\eta_{\phi,\widehat b}]\in H^1_\dR(S,2\pi\Z)$, then one can still define a map $\sigma_{\phi, \widehat b}$ as in Equation \eqref{definition1}. However, $\sigma_{\phi, b}$ and $\sigma_{\phi, \widehat b}$ will lift to maps from $\Hyp^2\to\widetilde{\AdS^3}$ which are equivariant for \emph{different} lifts to $\isom(\widetilde{\AdS^3})$ of the representation $\rho:\pi_1(S)\to\isom(\AdS^3)$. 

Lemma 2.1 in \cite{bonsepequivariant} shows that all the $\rho_l$-$\rho$-equivariant embeddings into $\AdS^3$ lift to embeddings into $\widetilde{\AdS^3}$ which are equivariant for the \emph{same} lift of $\rho$. This implies that the map $\sigma_{\phi, \widehat b}$ will never be an embedding if $[\eta_{\phi,\widehat b}]\in H^1_\dR(S,2\pi\Z)$ but $[\eta_{\phi,\widehat b}]\neq 0$.


\bibliographystyle{alpha}
\bibliography{../bs-bibliography}

\end{document}